\documentclass[11pt]{article}
\usepackage{latexsym,amsfonts,amssymb,amsmath,amsthm}
\usepackage{graphicx}
\usepackage{cite}
\usepackage[usenames,dvipsnames]{color}
\usepackage{ulem}
\usepackage[colorlinks=true]{hyperref}
\hypersetup{urlcolor=blue, citecolor=red}

\newcommand {\rd}{\color{red}}

\parindent 0.5cm
\evensidemargin 0cm \oddsidemargin 0cm \topmargin 0cm \textheight
22cm \textwidth 16cm \footskip 2cm \headsep 0cm

\begin{document}
\setlength{\baselineskip}{16pt}

\parindent 0.5cm
\evensidemargin 0cm \oddsidemargin 0cm \topmargin 0cm \textheight
22cm \textwidth 16cm \footskip 2cm \headsep 0cm

\newtheorem{theorem}{Theorem}[section]
\newtheorem{lemma}[theorem]{Lemma}
\newtheorem{proposition}[theorem]{Proposition}
\newtheorem{definition}{Definition}[section]
\newtheorem{example}{Example}[section]
\newtheorem{corollary}[theorem]{Corollary}

\newtheorem{remark}{Remark}[section]
\newtheorem{property}[theorem]{Property}
\numberwithin{equation}{section}
\newtheorem{mainthm}{Theorem}
\newtheorem{mainlem}{Lemma}

\numberwithin{equation}{section}

\def\p{\partial}
\def\I{\textit}
\def\R{\mathbb R}
\def\C{\mathbb C}
\def\u{\underline}
\def\l{\lambda}
\def\a{\alpha}
\def\O{\Omega}
\def\e{\epsilon}
\def\ls{\lambda^*}
\def\D{\displaystyle}
\def\wyx{ \frac{w(y,t)}{w(x,t)}}
\def\imp{\Rightarrow}
\def\tE{\tilde E}
\def\tX{\tilde X}
\def\tH{\tilde H}
\def\tu{\tilde u}
\def\d{\mathcal D}
\def\aa{\mathcal A}
\def\DH{\mathcal D(\tH)}
\def\bE{\bar E}
\def\bH{\bar H}
\def\M{\mathcal M}
\renewcommand{\labelenumi}{(\arabic{enumi})}

\def\disp{\displaystyle}
\def\undertex#1{$\underline{\hbox{#1}}$}
\def\card{\mathop{\hbox{card}}}
\def\sgn{\mathop{\hbox{sgn}}}
\def\exp{\mathop{\hbox{exp}}}
\def\OFP{(\Omega,{\cal F},\PP)}
\newcommand\JM{Mierczy\'nski}
\newcommand\RR{\ensuremath{\mathbb{R}}}
\newcommand\CC{\ensuremath{\mathbb{C}}}
\newcommand\QQ{\ensuremath{\mathbb{Q}}}
\newcommand\ZZ{\ensuremath{\mathbb{Z}}}
\newcommand\NN{\ensuremath{\mathbb{N}}}
\newcommand\PP{\ensuremath{\mathbb{P}}}
\newcommand\abs[1]{\ensuremath{\lvert#1\rvert}}

\newcommand\normf[1]{\ensuremath{\lVert#1\rVert_{f}}}
\newcommand\normfRb[1]{\ensuremath{\lVert#1\rVert_{f,R_b}}}
\newcommand\normfRbone[1]{\ensuremath{\lVert#1\rVert_{f, R_{b_1}}}}
\newcommand\normfRbtwo[1]{\ensuremath{\lVert#1\rVert_{f,R_{b_2}}}}
\newcommand\normtwo[1]{\ensuremath{\lVert#1\rVert_{2}}}
\newcommand\norminfty[1]{\ensuremath{\lVert#1\rVert_{\infty}}}

\title{Dynamics of a class of time-period strongly 2-cooperative system: integer-valued Lyapunov function and embedding property of limit sets}

\author {
\\
Mengmeng Gao and Dun Zhou\thanks{Corresponding author, Email: zhoudun@njust.edu.cn.  Partially supported by NSF of China No.11971232, 12331006, 12071217.}\\
School of Mathematics and Statistics\\
 Nanjing University of Science and Technology
\\ Nanjing, Jiangsu, 210094, P. R. China
\\
}
\date{}

\maketitle

\begin{abstract}
We construct an integer-valued Lyapunov function $\sigma(\cdot)$ for generalized negative cyclic feedback system; and prove that  $\sigma(\cdot)$ on any $\omega$-limit set which generated 
by Poincar\'{e} mapping of bounded solution of such strongly $2$-cooperative system is constant. Therefore, the $\omega$-limit can be continuously embedded into a compact subset of a two dimensional plane.  Finally, a dissipative condition is given to ensure that all orbits of such system are bounded.
\end{abstract}
\section{Introduction}
The concept of monotone dynamical systems(MDS) was introduced by M. W. Hirsch, H. Matano in the 1980s (\cite{hirsch1982,hirsch1984,hirsch1988a,hirsch19886,hirsch1989,hirsch1990,Matano84,Matano86,Matano87}); and has wide range of applications in biomathematics and control theory. From the view of geometry, the MDS has a special structure-closed convex cone, with the help of this special structure, ``most'' bounded trajectories will approach a set of equilibria, that is, so called generic quasi-convergence. Among others, cooperative systems are typical MDS. Taking a finite-dimensional system as an example:
\begin{equation}\label{cooperative}
  \dot{x}=f(x),\quad x\in D
\end{equation}  
where $D\subset \mathbb{R}^n$ is a non-empty, open convex set, $f\in C^1(\Omega)$. Then, \eqref{cooperative} is a cooperative if and only if
\begin{equation}\label{cooperative1}
  \frac{\partial f_i}{\partial x_j}(x)\geq 0, \quad i\neq j, x\in D,
\end{equation}
holds true. Moreover, if the Jacobi matrix of $f$ at every point $x\in D$ is irreducible, then \eqref{cooperative}- is a strongly cooperative system(see \cite{Smi95}).

Nevertheless, there are a lot of systems arising in applied science do not belong MDS, for instance, the negative cyclic feedback system(see \cite{MP1990_sub}). In this context, a generalization of MDS was proposed, Sanchez in \cite{Sanchez} introduced the invariant cones of rank $k$. A non-emepty closed set $C\subset\mathbb{R}^n$ is cone of rank $k$, if: (i) $x\in C$, $\alpha \in\mathbb{R}^n\Rightarrow \alpha x\in C$; (ii) $\max\{\dim W: C\supset W \text{ linear subspace}\}=k$. It is obvious that, MDS are systems with cones of rank $1$. By using cones of rank $2$, Sanchez projected part of the dynamics into planes, and then deduce the Poincar\'{e}-Bendixson property for some orbits, that is, some compact omega-limit sets without equilibrium points are just a periodic orbit. Later, this property was generalized to generic Poincar\'{e}-Bendixson theory for systems with cones of rank $2$ by Feng et al. \cite{FWW}.

In the current paper, we focus on a time periodic system in the following form:
\begin{equation}\label{system 1.1}
\begin{split}
\dot{x}_1 &=f_1(t,x_1,x_2,x_n),\\
\dot{x}_i &=f_i(t,x_{i-1},x_i,x_{i+1}),\quad 2\leq i\leq n-1.\\
\dot{x}_n &=f_n(t,x_{n-1},x_n ,x_1),\\
\end{split}
\end{equation}
with the nonlinear term satisfies
\begin{equation}\label{assume system 1.1}
 \begin{split}
\delta_{i}\frac{\partial f_{i}}{\partial x_{i-1}}(t,x)> 0&,\quad 1\leq i\leq n,\\
\delta_{i+1}\frac{\partial f_{i}}{\partial x_{i+1}}(t,x)\geq 0&,\quad 2\leq i\leq n-1,\\
  \end{split}
\end{equation}
where $\delta_{i}\in \{-1,1\},\ x_0=x_n,\ x_{n+1}=x_1$,  $f=(f_1,f_2,\cdots,f_n)$ is a $C^1$-function defined on $\mathbb{R}\times \Omega\subset\mathbb{R}  \times \mathbb{R}^{n}$ and $\Omega$ is a non-empty, open and convex set defined on $\mathbb{R}^n$, and there exists $T>0$
 such that $f(t+T,\cdot) \equiv f(t,\cdot)$.  Such structure widely exists in biological, control models: biological oscillators, neurological systems, cell regulation, enzyme reactions, as well as gene transcription \cite{Fer,Hasting1977,EL,TCM08}. 

For instance, consider the following four dimensional system:
\begin{equation}
\begin{aligned}
& \dot{x}_1=a_1-a_2 x_1 x_4, \\
& \dot{x}_2=a_3 x_1-a_4 x_2, \\
& \dot{x}_3=a_5 x_2-a_6 x_3, \\
& \dot{x}_4=a_7 x_3-a_8 x_1 x_4,
\end{aligned}
\end{equation}
where $a_i>0, i=1,\cdots,8$ are constant. It is an integral feedback model named antithetic controllers(see \cite{BGK}). The above system characterizes the closed-loop interconnection of an antithetic controller (reprented by the
variables $x_1$ and $x_4$) and a simple two-dimensional linear system (represented by the variables $x_2$ and
$x_3$). 

\eqref{system 1.1}+\eqref{assume system 1.1} is a generalization of the following cyclic feedback system that introduced by J. Mallet-Paret and H. Smith in \cite{MP1990_sub}  
\begin{equation}\label{MS-1}
 \begin{aligned}
\dot{x}_1&=f_{1}(x_{1},x_{n}), \\
\dot{x}_i&=f_{i}(x_{i-1},x_{i}),\quad 2 \leq i\leq n-1. \\
\dot{x}_n&=f_{n}(x_{n-1},x_{n})\\
  \end{aligned}
 \end{equation}
and satisfies
\begin{equation}\label{MS-2}
  \delta_{i}\frac{\partial f_{i}}{\partial x_{i-1}}(x_{i},x_{i-1})>0,\quad 1\leq i\leq n
\end{equation}
where $\delta_{i}\in \{-1,1\}$.

Let $\Delta=\delta_1\cdots\delta_n$, $x_i= u_{i}x_{i},u_{i}\in\{+1,-1\},1\leq i\leq n$, then \eqref{system 
1.1}+\eqref{assume system 1.1} becomes a new feedback system (see Section \ref{transformation}) that satisfies 
\begin{equation}\label{assume simple system 1.1}
  \begin{aligned}
    \delta_{1}\frac{\partial f_1}{\partial x_n} & >  0,\  \delta_{1}\frac{\partial f_n}{\partial x_1}\geq  0,\\
    \frac{\partial f_i}{\partial x_{i-1}} & >0,\quad 2\leq i \leq n,\\
    \frac{\partial f_i}{\partial x_{i+1}} & \geq 0,\quad 2\leq i \leq n-1.
  \end{aligned}
\end{equation}
Therefore, we can always assume, without loss of generality, that \eqref{system 1.1}+\eqref{assume system 1.1} is in the form of \eqref{system 1.1}+\eqref{assume simple system 1.1}. Then, if $\delta_1=\Delta= +1$, \eqref{system 1.1}+\eqref{assume simple system 1.1} is a cooperative system with positive feedback; if $\delta_1=\Delta= -1$, it is a non-cooperative system with negative feedback.

In a positive feedback system, the final product of a behavior will lead to more behavior occurring in the feedback loop, amplifying the initial action. Low frequency oscillators, birthing processes, and the expression of premembrane protein are positive feedback systems(see \cite{AL,Haraoubia}). A negative feedback system is a system that, when subjected to external stimuli or disturbances, the system will suppress or weaken this external excitation or interference through negative feedback, thereby achieving a new equilibrium state of the system. Typical negative feedback models include interspecific competition and cooperation systems, Lotka Volterra models, negative feedback loops of enterobacterium transcriptional replicators, and so on (see \cite{HN2018,MH06}). These negative feedback loops are considered the main reason for the existence of biological oscillations.

In the case  $\delta_1= +1$(positive feedback), \eqref{system 1.1}+\eqref{assume simple system 1.1} is 
a strongly monotone dynamic system (see \cite{Smi95}). And hence, if \eqref{system 1.1}+\eqref{assume simple system 1.1} is an autonomous system ($f$ is independent of $t$ \cite{ST91}), the $\omega$-limit set generated by ``most'' bounded trajectories is an equilibrium point. In fact, Poincar\'{e}-Bendixson theorem is established, that is, the $\omega$-limit set of any bounded solution of \eqref{system 1.1}+\eqref{assume simple system 1.1} contains no equilibrium should be a periodic orbit(see \cite{Per}); moreover, generic Morse-Smale property was also obtained(see \cite{fusco,Per}). If \eqref{system 
1.1}+\eqref{assume simple system 1.1} is a time $T$-periodic system(i.e. $f(t+T,\cdot)\equiv f(t,\cdot)$), ``most'' bounded orbits of the system 
will converge to a linearly stable $kT$-periodic orbit, where $k$ is a positive integer(see \cite{Pol92}). 

In the case  $\delta_1= -1$(negative feedback), \eqref{system 1.1}+\eqref{assume simple system 1.1} is no 
longer a MDS from the view of cone. In autonomous case ($f=(f_1,\cdots,f_n)^T$ independent of $t$), J. Mallet-Paret and H. Smith in \cite{MP1990_sub},  obtained the Poincar\'{e}-Bendixson theorem for \eqref{MS-1}+\eqref{MS-2}. Later, the Morse decomposition of global attractor was studied by T. Gedeon in \cite{Gedeon}, and complicated dynamics take place within some Morse set. If  \eqref{MS-1}+\eqref{MS-2} admits a unique equilibrium, then it is a globally asymptotically under suitable assumptions (see \cite{WLS}). Elkhader in \cite{elkhader1992_sub} extended the results in \eqref{system 1.1}+\eqref{assume simple system 1.1} to a more general form, that is, $f_1=f_1(x_1,x_n)$, $f_n=f_n(x_{n-1},x_n)$ in \eqref{system 1.1}+\eqref{assume system 1.1}, under the assumption that $f$ is a $C^{n-1}$ function on $\Omega$. Recently, by introducing the notion of $k$-cooperative dynamical systems(see Section \ref{k-cooperative}), \eqref{system 1.1}+\eqref{assume simple system 1.1} is a strongly $2$-cooperative system; and hence, Poincar\'{e}-Bendixson theorem is proved for \eqref{system 1.1}+\eqref{assume simple system 1.1} (see \cite{MS,weiss2021_sub}) by projecting the $\omega$-limit set of bounded orbit into a suitable $2$-dimensional space. 

In practical problems, the nonlinear term $f$ in \eqref{system 1.1}+\eqref{assume simple system 1.1} often depends on $t$, that is, \eqref{system 1.1}+\eqref{assume simple system 1.1} is a non-autonomous system. A natural question is how to describe the long time behavior of the solution of \eqref{system 1.1}+\eqref{assume simple system 1.1}, and can we project the $\omega$-limit set into a two dimensional plane? We try to answer this question and give some characterization about the 
global dynamics of \eqref{system 1.1}+\eqref{assume simple system 1.1} when $f$ of $t$ is $T$-period. 

We first generalize the important tool called the integer-valued Lyapunov function $\sigma(\cdot)$ (see Definition \ref{L-function}) in \cite{elkhader1992_sub,MP1990_sub} to \eqref{system 1.1}+\eqref{assume simple system 1.1}(see Theorem \ref{linear system large-constant}). $\sigma(\cdot)$ is only on an open and dense subset $\Lambda$ of $\mathbb{R}^n$ on which it is also continuous. The deductions in Theorem \ref{linear system large-constant} are different from that in \cite[Proposition 2.1]{elkhader1992_sub} or \cite[Propsotion 1.1]{MP1990_sub}; and moreover, the form of $\sigma(\cdot)$ and it's decreasing properties are uniquely determined by each other (see Proposition \ref{th-monotone of NmM}).

As an application, we then use $\sigma(\cdot)$ to reduce the dynamics on $\omega$-limit set of any bounded solution of \eqref{system 1.1}+\eqref{assume simple system 1.1} to the dynamics on a compact subset of $\mathbb{R}^2$. Precisely speaking, assume that  $\phi(t,x^0)$ is 
a forward bounded solution of \eqref{system 1.1}+\eqref{assume simple system 1.1} with an initial value of $x^0$. Let $P$ be the associated  Poincar\'{e} map  and $\omega^{p}(x^0)$ be the $\omega$-limit set. Then 

\begin{itemize}
  \item (see Theorem \ref{changzhixing-th}) For any two point $x,y \in \omega^{p}(x^{0}) \ (x\neq y)$ and $t\in \mathbb{R}$, there is 
  \begin{equation*} 
\sigma(\phi(t,x)-\phi(t,y))\equiv constant;
  \end{equation*}
   \item (see Theorem \ref{main}) $\omega^p(x^0)$ can be continuous embedded into a compact subset of two-dimensional plane. 
\end{itemize}

We give some remarks in the following 

 1)  From the view skew-product flows, Theorem \ref{main} means that the dynamics on $\cup_{t\in[0,T)}\omega^p(\phi(t,\\ x^0))$ can be viewed as an $T$-periodically forced flow on a compact subset of $\mathbb{R}^2$. And it should be noted that, in general, dynamics of two-dimensional discrete mappings still can be very complicated, for instance, the existence of Smale's horseshoe mapping.  Nevertheless, in some specific models, the dynamics may become simpler due to the constraints of some conditions (see Section \ref{1-2 cooperative}).

2) Our results can be viewed as a partial generalization of autonomous case in \cite{MS,weiss2021_sub} to time periodic case. In \cite{MS,weiss2021_sub}, nested invariant cones were frequently used to reduce the complexity of dynamics on $\omega$-limit. While in this paper, we give a delicate characterization of integer-valued Lyapunov function $\sigma(\cdot)$; and by using $\sigma(\cdot)$, we obtain the constancy property of $\sigma(\cdot)$ on $\omega^p(x_0)$ and finally achieve our goal. In fact, based on Theorem \ref{linear system large-constant}, we can further establish the Floquet theory for linearized equation of \eqref{system 
1.1}+\eqref{assume system 1.1}, then construct nested invariant cones, and finally obtain the structural stability of \eqref{system 
1.1}+\eqref{assume system 1.1} in autonomous case. Moreover, the continuous embedding can be then improved to Lipschitz embedding. All these will be included in our forthcoming paper.

3) The discovery of integer-valued Lyapunov function can be traced back to the characterization of oscillating matrix eigenvectors (see \cite[p.105]{Ga}).
 Similar functions widely exist in many other mathematical models developed in physics and biology, including semilinear parabolic equations on one-dimensional bounded fixed regions, tridiagonal competitive cooperative systems, Cauchy-Riemann equation on $S^1$, and play very important roles in characterizing the global dynamics of such systems (see \cite{H.MATANO:1982,Matano,smillie1984,terescak1994_sub,VMV}). 

The paper is organized as follows. In Section 2, we introduce some basic concept of dynamics. In Section 3, we define an integer-valued Lyapunov function for \eqref{linear system}+\eqref{linear sys assumption}, and develop its properties. In Section 4, we state our main results and give detail proofs. In Section 5, we give a dissipative condition, so that every orbit of \eqref{system 1.1}+\eqref{assume system 1.1} is bounded, and then discuss some relationships between $1$-cooperative and $2$-cooperative systems.

\section{Preliminaries}
In this section, we list some concept for discrete dynamical systems, $k$-cooperative systems, as well as a characterization for nature number set. 

\subsection{Basic concept in dynamical systems}
\begin{definition}\label{defn of dynamic}
{\rm Let X be a topological vector space and G be a topological group. If $\psi:G\times X \rightarrow X$ is continuous and satisfies:
\begin{itemize}
  \item[{\rm (1)}] $\psi(e,x)=x$, where $e$ is an identity element of $G$;
  \item[{\rm (2)}] $g_1,g_2\in G, \psi(g_1,\psi(g_2,x))=\psi(g_1g_2,x)$
\end{itemize}
Then, $(X,G,\psi)$ is called a dynamical system.}
\end{definition}
Obviously, for any  $g\in G$, $\psi(g,\cdot): X\rightarrow X,\ x\mapsto\psi(g,x)$ is 
homeomorphic. If $G=\mathbb{R}$ is an additive group of real numbers, then $(X,\mathbb{R},\psi)$ is called a flow.  If $G=\mathbb{Z}$ is an additive group of integers,  $(X,\mathbb{Z},\psi)$ is called a discrete dynamical system. Let $T:X\rightarrow X$ be a homeomorphism and $\psi:\mathbb{Z}\times X\rightarrow 
X$ be such that $\psi(n,x)=T^{n}(x)$, then $(X,\mathbb{Z},\psi)$ (or $(X,T)$) is a discrete dynamical system as well.

\begin{definition}
{\rm
 Let $(\mathbb{R}^n,T)$ be a discrete dynamical system. Then, the sets $O(x)=\{T^{n}(x)\}_{n\in \mathbb{Z}}$, $
  O^{+}(x)=\{T^{n}(x)\}_{n \in \mathbb{N}}$, $O^{-}(x)=\{T^{-n}(x)\}_{n \in \mathbb{N}}$ are called orbit, positive half orbit and negative half orbit of $x\in\mathbb{R}^n$, respectively.
  }
\end{definition}

\begin{definition}
{\rm
Let $(\mathbb{R}^n,T)$ be a discrete dynamical system. Assume that $ O^{+}(x)$(resp. $ O^{-}(x)$) is bounded, then the $\omega$-limit(resp. $\alpha(x)$-limit ) set of $x$ is defined in the following 
\[
\begin{split}
  \omega^T(x)&=\{z\in \mathbb{R}^{n}\mid  \exists n_{i}\rightarrow +\infty,T^{n_{i}}(x)\rightarrow z \};\\
  (\text{resp.} \alpha^T(x)&=\{z\in \mathbb{R}^{n}\mid  \exists n_{i}\rightarrow +\infty,T^{-n_{i}}(x)\rightarrow z \}.)
\end{split}
\]
Obviously, $\omega^T(x)$, $\alpha^T(x)$ are non-empty, compact, $T$-invariant sets.}
\end{definition}

\begin{definition}\label{insert property}
{\rm
Let $X$ and $\widetilde{X}$ are topological spaces, $(X,T)$ and $(\widetilde{X},\widetilde{T})$ are dynamical systems, $M$ is a compact invariant set in $(X,T)$. 
If there is a homeomorphic mapping $h:M\rightarrow h(M)\subset \widetilde{X}$, such that
\begin{equation*}
  h\circ T=\widetilde{T}\circ h.
\end{equation*}
Then,  $(M,T)$ is called embedded into $(h(\widetilde{M}),\widetilde{T})$.
}
\end{definition}

\begin{definition}\label{transitive}
{\rm
 Let $(X,T)$ be a discrete dynamical system. If there is $x\in X$ such that $\omega^T(x)=X$, then the continuous mapping $T$ on $X$ is called transitive.
 }
\end{definition}

\begin{lemma}\label{inverse-transitive}
 Let $X$ be a $Baire$ space, and $(X,T)$  be a discrete dynamical system, then $T$ is transitive if and only if $T^{-1}$ is transitive.
\end{lemma}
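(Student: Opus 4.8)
The plan is to argue via the standard equivalence, in a Baire space, between topological transitivity defined by a dense orbit and topological transitivity defined by the open-set condition: for every pair of nonempty open sets $U,V\subset X$ there exists $n\in\ZZ$ with $T^{n}(U)\cap V\neq\emptyset$. The point is that this open-set formulation is manifestly symmetric in $T$ and $T^{-1}$, since $T^{n}(U)\cap V\neq\emptyset$ is equivalent to $U\cap T^{-n}(V)\neq\emptyset$, i.e. to $(T^{-1})^{-n}(U)\cap V\neq\emptyset$. So once we show ``dense orbit'' $\Longleftrightarrow$ ``open-set condition'' (using that $X$ is Baire and, implicitly, second countable or at least that $X$ has a countable base, which holds here since $X\subset\RR^n$), the lemma follows: $T$ transitive $\iff$ open-set condition holds for $T$ $\iff$ open-set condition holds for $T^{-1}$ $\iff$ $T^{-1}$ transitive.

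The two directions of the equivalence are carried out as follows. First, suppose $x\in X$ has $\omega^{T}(x)=X$; I claim the full orbit $O(x)=\{T^{n}(x):n\in\ZZ\}$ is dense, and moreover that the open-set condition holds. Indeed, if $\omega^T(x)=X$ then every nonempty open set $V$ is hit by $T^{n_i}(x)$ for infinitely many $n_i\to+\infty$; given also a nonempty open $U$, density of the orbit in $U$ (which follows since $\omega^T(x)=X\supset U$) gives some $m$ with $T^m(x)\in U$, and then choosing $n_i>m$ with $T^{n_i}(x)\in V$ yields $T^{\,n_i-m}(T^m(x))\in V$, so $T^{\,n_i-m}(U)\cap V\neq\emptyset$. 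Conversely, suppose the open-set condition holds. Fix a countable base $\{V_k\}_{k\in\NN}$ of nonempty open sets for $X$. For each $k$, the set $G_k:=\bigcup_{n\in\ZZ}T^{-n}(V_k)$ is open (as $T$ is a homeomorphism) and dense: for any nonempty open $U$, the open-set condition gives $n$ with $T^{n}(U)\cap V_k\neq\emptyset$, hence $U\cap T^{-n}(V_k)\neq\emptyset$, so $U\cap G_k\neq\emptyset$. By the Baire category theorem, $\bigcap_{k}G_k$ is dense, in particular nonempty; any $x$ in this intersection has an orbit meeting every $V_k$, hence a dense full orbit. A small extra argument upgrades this to $\omega^T(x)=X$: if $O(x)$ is dense, then either $x$ is recurrent in the required sense, or one replaces $x$ by $T^{-N}(x)$ for large $N$; more cleanly, observe that density of $O(x)$ together with the open-set condition forces $O^+(T^m x)$ to be dense for every $m$, and passing to the $\omega$-limit gives $\omega^T(x)=X$. (One uses here that $X$ has no isolated points, or handles that trivial case separately.)

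I expect the main obstacle to be exactly this last point: passing from a dense \emph{full} $\ZZ$-orbit to a dense \emph{forward} orbit, i.e. to $\omega^T(x)=X$ as literally demanded by Definition~\ref{transitive}. The open-set condition and the Baire argument naturally produce a point with dense two-sided orbit, and one must check carefully that the forward tail is still dense; this is where the hypotheses (Baire, countable base, no isolated points) are genuinely used, and where a careless proof would have a gap. Once that is settled, the symmetry $T^{n}(U)\cap V\neq\emptyset \iff (T^{-1})^{-n}(U)\cap V\neq\emptyset$ makes the equivalence of transitivity for $T$ and $T^{-1}$ immediate, completing the proof.
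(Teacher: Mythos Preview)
The paper does not prove this lemma; its entire proof is the one-line citation ``See \cite[Corollary~I.11.5]{mane2012_sub}.'' Your argument is the standard one (and is essentially what that reference contains): pass to the open-set formulation of transitivity, which is manifestly symmetric under $T\mapsto T^{-1}$, and then use the Baire property together with a countable base to recover a transitive point. So your proposal is correct and supplies what the paper merely cites.

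On the gap you flag yourself: in the paper's single use of the lemma (inside the proof of Proposition~\ref{prop3}) the space $X=\alpha^p(x^*)$ is a compact subset of $\RR^n$, hence second countable and Baire. If $X$ has an isolated point, transitivity forces $X$ to be a finite periodic orbit and the conclusion is immediate. Otherwise, the passage from a dense two-sided orbit to $\omega^T(x)=X$ goes through as you outline, once one observes that the $\ZZ$-indexed open-set condition and the $\NN$-indexed one are equivalent (just swap the roles of $U$ and $V$ when the exponent has the wrong sign). So there is no real obstacle.
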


\begin{proof}
  See \cite[Corollary I.11.5]{mane2012_sub}.
\end{proof}

\subsection{$k$-positive and $k$-cooperative systems}\label{k-cooperative}
The concept of $k$-cooperative and $k$-positive systems was recently introduced by E. Weiss, M. Margaliot in \cite{weiss2021_sub}. Consider the following $n$-dimensional linear equation:
\begin{equation}\label{k-positive}
  \dot{x}=A(t)x, x\in \mathbb{R}^n
\end{equation}
where $A(t)$ is continuous in the interval under consideration. Then, \eqref{k-positive} is called {\it $k$-positive} system ($k=1,\cdots,n-1$), if it maps the set of vectors with at most $k-1$ sign variations to itself; moreover, if $A(t)$ is irreducible, it is then a {\it strongly $k$-positive} system. Particularly, if $k=1$, \eqref{k-positive} is then a positive linear system. System \eqref{cooperative} is called a {\it $k$-cooperative} (resp. {\it strongly $k$-cooperative}) system, if and only if that the corresponding linear variation system is a $k$-positive (resp. strongly $k$-positive) system. 

By introducing compound matrices and Metzler matrices,  a $k$-positive system ($k=2,\cdots,n-2$) equivalents to the $k$-th additive compound matrix of $A(t)$ is Metzler (see \cite[Theorem 4]{weiss2021_sub}). In other words, a $k$-positive system means that $A(t)$ of \eqref{k-positive} is of the following form:
\begin{itemize}
  \item  $(-1)^{k-1} a_{1 n}(t),(-1)^{k-1} a_{n 1}(t) \geq 0$;
  \item  $a_{i j}(t) \geq 0$ for all $i, j$ with $|i-j|=1$;
  \item  $a_{i j}(t)=0$ for all $i, j$ with $1<|i-j|<n-1$.
\end{itemize}

\subsection{A property of the natural number set}
\begin{lemma}\label{exist-k}
  Let $\mathcal{M}$ and $\mathcal{N}$ be two infinite subsets of $\mathbb{N}$. Then there are natural numbers $m_1,m_2,m_3
  \in \mathcal{M}$,\ $n_1,n_2,n_3\in \mathcal{N}$ and $k_1,k_2,l_1,l_2$, such that
  \begin{equation*}\label{lem3_41}
    n_3>k_1m_1+k_2m_2+m_1
  \end{equation*}
  and
  \begin{equation*}\label{lem3_42}
    m_3=l_1n_1+l_2l_2+n_3-k_1m_1+k_2m_2.
  \end{equation*}
\end{lemma}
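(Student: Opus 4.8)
I read the asserted relation (correcting the evident misprints) as
\[
m_3+k_1m_1+k_2m_2=l_1n_1+l_2n_2+n_3,\qquad n_3>k_1m_1+k_2m_2+m_1,
\]
with $k_1,k_2,l_1,l_2$ \emph{positive} integers, so that $k_1m_1+k_2m_2$ and $l_1n_1+l_2n_2$ range over the sets $S_{\mathcal{M}}=\{k_1m_1+k_2m_2:k_1,k_2\ge1\}$ and $S_{\mathcal{N}}=\{l_1n_1+l_2n_2:l_1,l_2\ge1\}$. The tools are elementary: the Sylvester--Frobenius bound (for $a,b\ge1$ there is $F(a,b)$ such that every integer $\ge F(a,b)$ divisible by $\gcd(a,b)$ lies in $\{k_1a+k_2b:k_1,k_2\ge1\}$) together with pigeonhole inside an infinite subset of $\mathbb{N}$.

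The plan is to engineer the left-hand side so that it is automatically divisible by $\gcd(m_1,m_2)$. Fix $m_1\in\mathcal{M}$; by pigeonhole choose a residue $\rho$ modulo $m_1$ attained by infinitely many elements of $\mathcal{M}$ and put $\mathcal{M}_1=\{m\in\mathcal{M}:m\equiv\rho\pmod{m_1}\}$, an infinite set. For any $m_2,m_3\in\mathcal{M}_1$ we have $m_1\mid(m_3-m_2)$, hence $\gcd(m_1,m_2)\mid(m_3-m_2)$, and since $\gcd(m_1,m_2)\mid m_2$ this gives the crucial divisibility $\gcd(m_1,m_2)\mid m_3$. Now fix any $m_2\in\mathcal{M}_1$, fix any small $n_2\in\mathcal{N}$, and set $n_1:=n_3$ where $n_3\in\mathcal{N}$ will be chosen large. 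Since $\gcd(n_1,n_2)=\gcd(n_3,n_2)$ divides both $n_2$ and $n_3$, the number $d:=\operatorname{lcm}\bigl(\gcd(m_1,m_2),\gcd(n_3,n_2)\bigr)$ divides $\operatorname{lcm}(m_1,n_2)$ and so is bounded independently of $m_3,n_3$. Any common value $N$ of the two sides must be $\equiv m_3\equiv0\pmod{\gcd(m_1,m_2)}$ and $\equiv n_3\equiv0\pmod{\gcd(n_3,n_2)}$, hence $d\mid N$; conversely, whenever $d\mid N$ and $N-m_3$, $N-n_3$ are large enough, Sylvester--Frobenius gives $N-m_3\in S_{\mathcal{M}}$ and $N-n_3\in S_{\mathcal{N}}$, so all four coefficients can be taken positive.

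It remains to fit the magnitudes together. Let $F_{\mathcal{M}}$ be a threshold depending only on $m_1,m_2$ making the first membership work, and let $F_{\mathcal{N}}(n_3)=O(n_3 n_2)$ be the corresponding (now $n_3$-dependent) threshold for the second. First choose $n_3\in\mathcal{N}$ with $n_3>m_1+F_{\mathcal{M}}+d$, and then $m_3\in\mathcal{M}_1$ with $m_3>n_3+F_{\mathcal{N}}(n_3)$; both choices are possible because $\mathcal{M}_1$ and $\mathcal{N}$ are infinite. With $m_3$ this large, the least multiple $N$ of $d$ that is $\ge m_3+F_{\mathcal{M}}$ automatically exceeds $n_3+F_{\mathcal{N}}(n_3)$, so it lies in both shifted semigroups, while $N<m_3+F_{\mathcal{M}}+d<m_3+n_3-m_1$. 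Reading off $k_1,k_2\ge1$ from $N-m_3=k_1m_1+k_2m_2$ and $l_1,l_2\ge1$ from $N-n_3=l_1n_1+l_2n_2$, the identity holds by construction and the required inequality $n_3>k_1m_1+k_2m_2+m_1$ is exactly $N<m_3+n_3-m_1$.

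I expect the only delicate point to be the nesting of the "sufficiently large" requirements in the last step: $n_3$ must be taken large relative to the already-fixed data $m_1,m_2,n_2$ (which determine $F_{\mathcal{M}}$ and $d$), and then $m_3$ large relative to $n_3$ — and this is exactly what the infinitude of $\mathcal{M}_1$ and of $\mathcal{N}$ buys us. (If one preferred to allow $k_i,l_i=0$ the statement stays true — just take them positive — so positivity of the coefficients, which is what drives the numerical-semigroup step, costs nothing.)
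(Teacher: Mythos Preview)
The paper does not actually prove this lemma: its entire ``proof'' is the citation ``See \cite[Lemma 3.4]{terescak1994_sub}.'' So there is no in-paper argument to compare against, and your proposal supplies a genuine, self-contained proof where the paper offers none.

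Your argument is correct. The key idea --- use pigeonhole on residues modulo $m_1$ to force $\gcd(m_1,m_2)\mid m_3$, set $n_1=n_3$ so that the divisibility on the $\mathcal{N}$ side is automatic, bound the common modulus $d=\operatorname{lcm}(\gcd(m_1,m_2),\gcd(n_3,n_2))$ independently of the large parameters, and then invoke the Sylvester--Frobenius threshold on each side --- is clean and the order of quantifiers (fix $m_1,m_2,n_2$; choose $n_3$ large; then $m_3$ larger still) is exactly right. The final size estimate $N<m_3+F_{\mathcal{M}}+d<m_3+n_3-m_1$ correctly delivers the inequality $n_3>k_1m_1+k_2m_2+m_1$. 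Two minor points worth making explicit in a final write-up: (i) the lemma does not require the $n_i$ to be distinct, so taking $n_1=n_3$ is legitimate and is also harmless in the downstream application (Proposition~\ref{prop3}); (ii) the Frobenius bound you use should be stated for representations with \emph{positive} coefficients, which just shifts the usual threshold by $m_1+m_2$ (resp.\ $n_1+n_2$), so $F_{\mathcal{M}}$ and $F_{\mathcal{N}}(n_3)$ are still of the asserted orders.
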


\begin{proof}
 See \cite[Lemma 3.4]{terescak1994_sub}.
\end{proof}

\section{Integer-valued Lyapunov function for a class of strongly 2-positive system}
In this section, we consider a class of strongly 2-positive system, and introduce some properties for an integer-valued Lyapunov function of these system. More precisly, the system we consider here is the following 
\begin{equation}\label{linear system}
  \begin{aligned}
    \dot{x}_1&=a_{1,1}(t)x_1+a_{1,2}(t)x_2+a_{1,n}(t)x_{n},\\
    \dot{x}_i&=a_{i,i-1}(t)x_{i-1}+a_{i,i}(t)x_{i}+a_{i,i+1}(t)x_{i+1},\quad 2\leq i\leq n-1,\\
    \dot{x}_{n}&=a_{n,n-1}(t)x_{n-1}+a_{n,1}(t)x_{1}+a_{n,n}(t)x_{n}.
  \end{aligned}
\end{equation}
in which the coefficient functions with respect to $t$ are continuous over $\mathbb{R}$. Denote $A(t)=(a_{ij}(t))_{n\times n}$ be the coefficient matrix of 
\eqref{linear system}, and assume that
\begin{equation}\label{linear sys assumption}
  \begin{aligned}
    & a_{1,n}\leq 0,  a_{n,1}\leq 0\\
    & a_{i,i-1}(t)\geq 0,\quad 2\leq i\leq n,\\
    & a_{i,i+1}(t)\geq 0,\quad 1\leq i\leq n-1 \\
    & \prod_{i=1}^{n}a_{i,i-1}(t)+\prod_{i=1}^{n}a_{i,i+1}(t)<0.
  \end{aligned}
\end{equation}
Note that $A(t)$ is irreducible, and hence, \eqref{linear system}+\eqref{linear sys assumption} is a strongly $2$-positive system.

Denote $\mathcal{O}=\{x\mid x\in \mathbb{R}^{n}$ and $x_i\neq0, i=1,2,\cdots,n\}$. Then, similar as in \cite{elkhader1992_sub,MP1990_sub}, we introduce the integer-valued Lyapunov function $\sigma(\cdot)$ of system \eqref{linear system}+\eqref{linear sys assumption} in the following
\begin{definition}\label{L-function}
{\rm
For any $x\in \mathcal{O}$, define
\begin{equation*}
  \sigma(x)=card\{i|\delta_{i}x_{i}x_{i-1}\leq 0\}.
\end{equation*}
where $\delta_1=-1, \delta_{i}=1, \  2\leq i \leq n$, and $card$ represents the number of elements contained in the set.
}
\end{definition} 

Define
\begin{equation*}
  \tilde{n}=\left\{
\begin{matrix}
n,\quad \text{if $n$ is odd},\\
n-1,\quad \text{if $n$ is even}.
\end{matrix}
\right.
\end{equation*}
It is obvious that $\mathcal{O}$ is an open and dense set in $\mathbb{R}^{n}$, $\sigma(\cdot)\in\{1,3,5,\cdots,\tilde{n}\}$ and continuous on $\mathcal{O}$. Moreover, 
its domain of definition can be extended(by continuity) to 
$$\Lambda=\{x\mid
x\in \mathbb{R}^{n}, \text{if}\ x_i=0, \text{then}\ \delta_{i}\delta_{i+1}x_{i+1}x_{i-1}<0\}.
$$

\begin{definition}
{\rm
For all $x\in\mathbb{R}^n$, define functions as:
\[
\sigma_m,\ \sigma_M:\mathbb{R}^n\to \{1,3,\cdots,\tilde{n}\}.
\]
where $\sigma_m(x)$ and $\sigma_M(x)$ are the minimum and maximum values of $\sigma (y)$ over $\mathcal{U}\cap \Lambda$, and $\mathcal{U}$ is a sufficiently 
small neighborhood of $x$ in $\mathbb{R}^n$.}
\end{definition}

We have the following characterization about $\sigma(\cdot)$.
\begin{theorem}\label{linear system large-constant}
Let $x(t)$ be a nontrivial bounded solution of system $\eqref{linear system}+\eqref{linear sys assumption}$, then 
\begin{enumerate}
         \item[{\rm(i)}] $x(t)\in\Lambda$, except for finite points of $\mathbb{R}$;
          \item[{\rm(ii)}] $\sigma(x(t))$ is locally constant, where $x(t)\in\Lambda$;
          \item[{\rm(iii)}] if $x(t_{0})\notin\Lambda$, then $\sigma(x(t_ {0}^{+}))<\sigma(x(t_{0}^{-}))$;
          \item[{\rm(iv)}] if $x(t)\in\Lambda$, then $(x_i(t),x_ {i+1}(t))\neq(0,0)$(for each $i\in\{1,2,\cdots,n\}$);
         \item[{\rm(v)}] there exists $t_0>0$ such that when $t\in [t_0,+\infty) \cup (-\infty,-t_0]$, $x(t)\in \Lambda$ and $\sigma(x(t))$ is constant.
\end{enumerate}
\end{theorem}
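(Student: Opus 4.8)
The plan is to dispose of (iv) and (ii) immediately and then extract (i), (iii) and (v) from a single local analysis at a time where the solution leaves $\Lambda$. Item (iv) is contained in the definition of $\Lambda$: if $x_i(t)=x_{i+1}(t)=0$, the defining inequality at index $i$, namely $\delta_i\delta_{i+1}x_{i+1}(t)x_{i-1}(t)<0$, has left side $0$ and so fails, whence $x(t)\notin\Lambda$. For (ii), $\Lambda$ is open (its complement --- ``some $x_i=0$ with $\delta_i\delta_{i+1}x_{i+1}x_{i-1}\ge0$'' --- is sequentially closed, the offending index being fixed along a subsequence), so $G:=\{t:x(t)\in\Lambda\}$ is open; on $G$, $t\mapsto\sigma(x(t))$ is the composition of the continuous path $x(\cdot)$ with $\sigma|_\Lambda$, continuous by construction, hence an integer-valued continuous function on an open subset of $\mathbb R$, hence locally constant.

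Now fix $t_0$ with $x(t_0)\notin\Lambda$ and put $Z=\{j:x_j(t_0)=0\}$. Since $x$ solves the linear system $\dot x=A(t)x$ with continuous $A$, forward and backward uniqueness apply, so the nontrivial solution $x$ never vanishes; hence $Z$ is a proper cyclic subset, a disjoint union of maximal ``blocks'' of consecutive indices, each block $\{p,p+1,\dots,q\}$ flanked by indices $p-1,q+1$ with $x_{p-1}(t_0),x_{q+1}(t_0)\neq0$. \textbf{Step A (isolation).} I would prove there is $\varepsilon>0$ with $x_j(t)\neq0$ for all $j\in Z$ and all $t$ with $0<|t-t_0|<\varepsilon$, and that no coordinate vanishes identically near $t_0$. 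The engine is the cyclic structure together with the hypothesis $\prod_i a_{i,i-1}(t)+\prod_i a_{i,i+1}(t)<0$: at $t_0$ (hence on a neighbourhood) at least one of the two feedback chains $a_{1,n}a_{2,1}\cdots a_{n,n-1}$ or $a_{1,2}a_{2,3}\cdots a_{n-1,n}a_{n,1}$ is nowhere zero, so a block's nonzero boundary value is transmitted inward, forcing the appropriate endpoint coordinate of the block to have a nonzero first derivative at $t_0$; an induction down the block, carried out with integral (not Taylor) estimates since $A$ is only continuous, then shows each block coordinate vanishes at $t_0$ to finite order with a definite local sign determined, through the active chain(s), by $\sgn(x_{p-1})$ and $\sgn(x_{q+1})$ --- in particular each block coordinate is nonzero for $0<|t-t_0|<\varepsilon$, and $x_j\equiv0$ on a subinterval is impossible (it would iterate around the cycle to $x\equiv0$). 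Consequently the set of bad times is discrete.

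\textbf{Step B (the drop).} The signs $\sgn x_j(t)$ for $j\notin Z$ are locally constant near $t_0$, and the contributions to $\sigma$ from pairs of indices lying outside every block are the same for $t_0^-$ and $t_0^+$; thus it suffices to compare, block by block, the number of twisted sign changes in the string $(\sgn x_{p-1},\sgn x_p(t),\dots,\sgn x_q(t),\sgn x_{q+1})$ at $t=t_0^-$ and $t=t_0^+$. Using Step A one reads off this string on each side from the two boundary signs; the sign conditions \eqref{linear sys assumption} together with the activity of a feedback chain force the endpoint coordinate to cross $0$ in the ``correct'' direction (e.g.\ an interior simple zero $x_j$ with equal-sign nonzero neighbours necessarily crosses so that the local pattern changes $(+,-,+)\to(+,+,+)$), and one checks case by case --- single interior index, single wrap index, longer block, possibly two active chains --- that the block's count never increases, and strictly decreases by at least $2$ at any block for which the $\Lambda$-condition fails at $t_0$. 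Since a bad time has at least one such block and $\sigma$ takes only odd values, $\sigma(x(t_0^+))\le\sigma(x(t_0^-))-2$; this is (iii), and Step A makes $\sigma(x(t_0^\pm))$ well defined.

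Finally (i) and (v) are bookkeeping. By Step A the bad set is discrete, by (ii) $\sigma(x(\cdot))$ is constant on each complementary interval, and by (iii) it drops by at least $2$ across each bad time while remaining in $\{1,3,\dots,\tilde n\}$; a strictly monotone integer sequence confined to $[1,\tilde n]$ is finite, so there are at most $(\tilde n-1)/2$ bad times, proving (i). For (v), choose $t_0>0$ exceeding the absolute value of every (finitely many) bad time: each of the connected rays $[t_0,\infty)$ and $(-\infty,-t_0]$ contains no bad time, so $x(t)\in\Lambda$ there, and $\sigma(x(\cdot))$, locally constant on a connected set, is constant on each ray. The main obstacle is Step A --- propagating (non)vanishing through a multi-index block in the merely-continuous setting and pinning down the order of vanishing and local sign of each block coordinate, which is precisely where $\prod_i a_{i,i-1}+\prod_i a_{i,i+1}<0$ is indispensable --- together with the matching combinatorial check in Step B for blocks straddling the wrap index, where $\sigma$ carries its twist.
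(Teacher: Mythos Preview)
Your proposal is essentially correct and follows the classical direct route of Mallet-Paret--Smith and Elkhader: decompose the zero set at a bad time into cyclic blocks, propagate nonvanishing inward from a block's nonzero boundary using the activity of a feedback chain (guaranteed by $\prod a_{i,i-1}+\prod a_{i,i+1}<0$), read off the local signs on either side of $t_0$ via iterated integral estimates, and then do the combinatorial bookkeeping on $\sigma$. Your handling of (ii), (iv), (i), (v) is fine, and you correctly flag the only genuinely delicate spot, namely the finite-order vanishing/sign determination for deep blocks under merely continuous $A(t)$.

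The paper proceeds quite differently. It does not work block-by-block at a fixed bad time; instead it first proves the more general Proposition~3.1 via a stratification of the boundary of the level set $m_h=\{\sigma_m=h\}$ into pieces $N_{h,q}$ indexed by the ``depth'' $q$ of degeneracy, and runs an induction on $q$ using the index set $I_x$ of coordinates whose derivative has a definite sign (Lemmas on $\theta_h$ open, on local coincidence of $\partial m_h$ with $N_h$, and on $I_x\cap I_{x'}\neq\varnothing$). The continuous-coefficient case is then handled by approximating $A$ by matrices with strict inequalities and passing to the limit through the closed sets $\{\sigma_m\le h\}$, $\{\sigma_M\ge h\}$, rather than by direct integral estimates. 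Theorem~3.1 is then a two-line corollary. The payoff of the paper's route is structural: Proposition~3.1 also yields the converse, i.e.\ the decreasing property of $\sigma$ \emph{characterises} matrices of the form \eqref{linear sys assumption} on an open dense set, something your blockwise analysis does not address. Conversely, your approach is more explicit about the local picture (orders of vanishing, actual one-sided signs of each coordinate), which can be useful downstream; and it avoids introducing the auxiliary sets $N_{h,q}$ and the approximation argument.
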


\begin{corollary}\label{nonlinear system large-constant}
  Let $\phi(t,x)$ and $\phi(t,y)$ are two different bounded solutions of system $\eqref{system 1.1}+\eqref{assume simple system 1.1}$, then
  \begin{enumerate}
         \item[{\rm(i)}]  $\phi(t,x)-\phi(t,y)\in\Lambda$ except for finite points of $\mathbb{R}$;
        \item[{\rm(ii)}]  $\sigma(\phi(t,x)-\phi(t,y))$ is locally constant, where $\phi(t,x)-\phi(t,y)\in\Lambda$;
         \item[{\rm(iii)}]  if $\phi(t_0,x)-\phi(t_0,y)\notin\Lambda$, then $\sigma(\phi(t_0^+,x)-\phi(t_0^+,y))<\sigma(\phi(t_0^-,x)-\phi(t_0^-,y))$ $;$
         \item[{\rm(iv)}]  if $\phi(t,x)-\phi(t,y)\in\Lambda$, then $(\phi(t,x)_{i},\phi(t,x)_{i+1}) \neq (\phi(t,y)_{i},\phi(t,y)_{i+1})$(for each $i\in\{1,2,\cdots,n\}$);
        \item[{\rm(v)}]  there exist $t_0>0 $ and $C_1,C_2\in \{0,1,\cdots,n-1\},$ where $\ C_1\leq C_2 $, such that
	\begin{equation*}\label{const-eq}
		\sigma(\phi(t,x)-\phi(t,y))=C_{1},t\geq t_{0},\quad \sigma(\phi(t,x)-\phi(t,y))=C_{2},t\leq -t_{0}.
	\end{equation*}
         \end{enumerate}
\end{corollary}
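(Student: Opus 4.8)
The goal is to deduce Corollary \ref{nonlinear system large-constant} from Theorem \ref{linear system large-constant} by the standard device of passing from the nonlinear flow to a linear equation satisfied by the difference of two solutions. The plan is as follows.

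Fix two distinct bounded solutions $\phi(t,x)$ and $\phi(t,y)$ of \eqref{system 1.1}+\eqref{assume simple system 1.1}, and set $z(t)=\phi(t,x)-\phi(t,y)$. First I would write, for each coordinate $i$, the difference $\dot z_i = f_i(t,\phi(t,x))-f_i(t,\phi(t,y))$ and apply the mean value theorem (integral form: $f_i(t,u)-f_i(t,v)=\int_0^1 D f_i(t,v+s(u-v))\,ds\cdot(u-v)$) along the segment joining $\phi(t,y)$ to $\phi(t,x)$, which lies in $\Omega$ by convexity. This produces a linear non-autonomous system $\dot z = A(t) z$ whose entries $a_{ij}(t)$ are the corresponding averaged partial derivatives $\int_0^1 \frac{\partial f_i}{\partial x_j}(t,\phi(t,y)+s z(t))\,ds$. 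Because $f\in C^1$ and $\phi(t,x),\phi(t,y)$ are bounded and continuous, these coefficients are continuous in $t$ on $\mathbb{R}$; and the sign conditions \eqref{assume simple system 1.1} are preserved under integration in $s$, so $A(t)$ satisfies exactly \eqref{linear sys assumption}. (The strict inequalities in \eqref{assume simple system 1.1} give $a_{i,i-1}(t)>0$ for $2\le i\le n$ and, in the negative-feedback case $\delta_1=-1$, $a_{1,n}(t)<0$; the product condition $\prod a_{i,i-1}(t)+\prod a_{i,i+1}(t)<0$ then follows from $\delta_1=\Delta=-1$.) Here I should be slightly careful about one point: one needs $z(t)$ to remain the difference of two genuinely distinct solutions for all $t$, i.e. $z(t)\neq 0$ for every $t$; this is immediate from uniqueness of solutions of the ODE, since $z(t_0)=0$ for some $t_0$ would force $\phi(\cdot,x)\equiv\phi(\cdot,y)$.

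With this reduction in hand, $z(t)$ is a nontrivial bounded solution of a system of the form \eqref{linear system}+\eqref{linear sys assumption}, so Theorem \ref{linear system large-constant} applies directly to $z(t)=\phi(t,x)-\phi(t,y)$. Statements (i)--(iv) of the corollary are then literal transcriptions of (i)--(iv) of the theorem, and for (iv) one additionally uses that item (iv) of the theorem says $(z_i(t),z_{i+1}(t))\neq(0,0)$, which is exactly $(\phi(t,x)_i,\phi(t,x)_{i+1})\neq(\phi(t,y)_i,\phi(t,y)_{i+1})$. For item (v): Theorem \ref{linear system large-constant}(v) gives a $t_0>0$ and constants so that $\sigma(z(t))$ is constant, say $=c_+$, on $[t_0,\infty)$ and constant, say $=c_-$, on $(-\infty,-t_0]$; combining with monotonicity (iii) across the finitely many jump times in between yields $c_+\le c_-$. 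It remains only to note that $\sigma$ takes values in $\{1,3,\dots,\tilde n\}$, so writing $C_1=c_+$, $C_2=c_-$ gives $C_1\le C_2$ with both in $\{0,1,\dots,n-1\}$ (indeed in $\{1,3,\dots,\tilde n\}\subset\{0,\dots,n-1\}$).

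The only genuine obstacle is verifying that the averaged Jacobian $A(t)$ really does inherit assumption \eqref{linear sys assumption}, in particular the strictness needed to keep $A(t)$ irreducible and to get the product inequality; this is where the hypotheses \eqref{assume simple system 1.1} (as opposed to the weaker \eqref{assume system 1.1}) and the reduction to $\delta_1=\Delta$ carried out in Section \ref{transformation} are essential. Everything else is bookkeeping: continuity of the coefficients, nonvanishing of $z$, and a direct citation of Theorem \ref{linear system large-constant}. I would therefore structure the proof as: (1) form $z(t)$ and the averaged linear system; (2) check \eqref{linear sys assumption} term by term using the sign of each $\partial f_i/\partial x_j$ and $\Delta=-1$ (or the trivial positive-feedback case, where a separate classical argument applies); (3) invoke Theorem \ref{linear system large-constant} and read off (i)--(v).
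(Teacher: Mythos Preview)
Your proposal is correct and follows exactly the same route as the paper: form $z(t)=\phi(t,x)-\phi(t,y)$, write $\dot z=\tilde A(t)z$ with $\tilde a_{ij}(t)=\int_0^1 \partial_{x_j} f_i\bigl(t,r\phi(t,x)+(1-r)\phi(t,y)\bigr)\,dr$, observe that $\tilde A(t)$ inherits the structure \eqref{linear sys assumption} from \eqref{assume simple system 1.1}, and invoke Theorem~\ref{linear system large-constant}. Your write-up is in fact more careful than the paper's (explicitly noting convexity of $\Omega$, nonvanishing of $z$ by uniqueness, and the monotonicity step giving $C_1\le C_2$), but the underlying argument is identical.
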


\begin{proposition}\label{th-monotone of NmM}
Let $t\rightarrow \tilde A(t)=(\tilde a_{ij}(t))_{n\times n}$ be a continuous matrix function in $(a,b)$, and  $\tilde\varphi(t,x)$ be a nontrivial solution of  
\[\dot x=\tilde A(t)x\]
with an initial value condition 
$\tilde \varphi(t_0,x^0)=x^0$. Then the following two are equivalent:

\begin{enumerate}
 \item[{\rm(i)}] 
there is an open and dense subset $\mathcal{G}$ of $(a,b)$ such that for each $t$ in $\mathcal{G}$, $\tilde A(t)$ is in the form of \eqref{linear sys assumption};
 \item[{\rm(ii)}]
 $\sigma(\tilde\varphi(t,x_0))$ is decreasing with respect to $t\geq t_0$, that is
  \begin{equation*}
  x^0\neq 0, x^0\notin \Lambda\Rightarrow\tilde \varphi(t_0+\epsilon,x^0),\tilde \varphi(t_0-\epsilon,x^0)\in \Lambda,
\end{equation*}
and
\begin{equation*}
  \sigma_m(x^0)=\sigma(\tilde \varphi(t_0+\epsilon,x^0))<\sigma(\tilde \varphi(t_0-\epsilon,x^0))=\sigma_M(x^0)
\end{equation*}
where $\epsilon>0$ is small enough.
\end{enumerate}
\end{proposition}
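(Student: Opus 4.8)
The plan is to establish the two implications separately; (i)$\Rightarrow$(ii) is a routine verification, while (ii)$\Rightarrow$(i) carries the weight. The implication (i)$\Rightarrow$(ii) only re-packages, locally in $t$, the content of Theorem~\ref{linear system large-constant}. Indeed the assertion in (ii) is local in $t$, so fix $t_0\in(a,b)$ and $x^0=\tilde\varphi(t_0,x^0)\neq0$ with $x^0\notin\Lambda$. If $t_0\in\mathcal G$ one may pass to a subinterval on which $\tilde A$ is \emph{everywhere} of the form \eqref{linear sys assumption}; then ``$\tilde\varphi(t_0\pm\epsilon,x^0)\in\Lambda$'' and ``$\sigma_m(x^0)=\sigma(\tilde\varphi(t_0+\epsilon,x^0))<\sigma(\tilde\varphi(t_0-\epsilon,x^0))=\sigma_M(x^0)$'' are exactly the assertions (i)--(iii) of Theorem~\ref{linear system large-constant}, whose proofs only use the restriction of $\tilde A$ to a neighbourhood of the crossing time (boundedness of the solution plays no role there). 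For $t_0\notin\mathcal G$ the sign inequalities of \eqref{linear sys assumption} still hold at $t_0$ by continuity and density, so the only point left is strictness of the drop; this is recovered either by approximating $\tilde A$ by matrices that satisfy the strict product inequality everywhere and passing to the limit, or by arguing along times in $\mathcal G$, which meet every punctured neighbourhood of $t_0$.

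For (ii)$\Rightarrow$(i) the main device is a family of probes. Fix $t_0\in(a,b)$ and $i\in\{1,\dots,n\}$; pick $x^0$ with $x^0_i=0$, $x^0_j\neq0$ for $j\neq i$, and the signs of $x^0_{i-1},x^0_{i+1}$ chosen so that $\delta_i\delta_{i+1}x^0_{i-1}x^0_{i+1}>0$ (indices read cyclically, $x_0=x_n$, $x_{n+1}=x_1$). Then $x^0\neq0$ and $x^0\notin\Lambda$, so (ii) applies to $\tilde\varphi(\cdot,x^0)$ at $t_0$. Set $d:=\dot x_i(t_0)=\sum_j\tilde a_{ij}(t_0)x^0_j$ and suppose $d\neq0$: then $x_i(\cdot)$ has a transversal zero at $t_0$, all other coordinates keep the sign of $x^0_j$ for small $|t-t_0|$, and in $\sigma=\card\{j:\delta_jx_jx_{j-1}\le0\}$ only the terms at positions $i$ and $i+1$ change across $t_0$; since $\sgn(\delta_ix^0_{i-1})=\sgn(\delta_{i+1}x^0_{i+1})=:s$, both of those terms are counted precisely when $\sgn x_i=-s$, so $\sigma$ jumps by $\pm2$ and the drop forced by (ii) gives $\sgn(d)=s$. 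Thus $d\neq0$ implies $\sgn(d)=\sgn(\delta_ix^0_{i-1})=\sgn(\delta_{i+1}x^0_{i+1})$; call this constraint $(\star)$. Now if $\tilde a_{i\ell}(t_0)\neq0$ for some $\ell$ with $1<|i-\ell|<n-1$, then $\ell\notin\{i-1,i,i+1\}$, so $x^0_\ell$ is free; taking $|x^0_\ell|$ large with $\sgn(x^0_\ell)=-s\,\sgn(\tilde a_{i\ell}(t_0))$ gives $d\neq0$ with $\sgn(d)=-s$, contradicting $(\star)$. Hence $\tilde a_{i\ell}(t_0)=0$ for all $t_0$ and all such $\ell$, which is the structural-zero part of \eqref{linear system}. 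With these zeros $d=s\big(\delta_i\tilde a_{i,i-1}(t_0)|x^0_{i-1}|+\delta_{i+1}\tilde a_{i,i+1}(t_0)|x^0_{i+1}|\big)$; letting $|x^0_{i+1}|\to0^+$ and then $|x^0_{i-1}|\to0^+$ in $(\star)$ forces $\delta_i\tilde a_{i,i-1}(t_0)\ge0$ and $\delta_{i+1}\tilde a_{i,i+1}(t_0)\ge0$. Ranging over all $i$ and using $\delta_1=-1$, $\delta_j=1$ $(j\ge2)$, this yields, for every $t_0$, all the sign inequalities of \eqref{linear sys assumption}.

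It remains to produce the open dense set $\mathcal G=\{t:\prod_i\tilde a_{i,i-1}(t)+\prod_i\tilde a_{i,i+1}(t)<0\}$; this set is open, and by the sign inequalities just obtained both products are $\le0$ throughout $(a,b)$. If $\mathcal G$ were not dense there would be an open interval $J$ on which both products vanish identically; applying the Baire category theorem to the closed covers $\{t\in J:\tilde a_{k,k-1}(t)=0\}_{k=1}^n$ and then $\{t\in J:\tilde a_{m,m+1}(t)=0\}_{m=1}^n$ gives a subinterval $I\subset J$ and indices $k,m$ with $\tilde a_{k,k-1}\equiv0$ and $\tilde a_{m,m+1}\equiv0$ on $I$. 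Then the subspace $W=\{x:x_j=0\text{ for }j\text{ in the cyclic arc from }k\text{ to }m\}$ is invariant on $I$; if $m\neq k-1$, $W$ is proper and nonzero, and one readily chooses $x^0\in W\setminus\{0\}$ with $x^0\notin\Lambda$ whose solution stays in $\Lambda^{c}$ throughout a subinterval of $I$, contradicting (ii). In the residual case $m=k-1$ the link between coordinates $k-1$ and $k$ is entirely severed and the system on $I$ is, after relabelling, a tridiagonal system; here one constructs, along the lines of the classical tridiagonal examples (where the number of sign changes in the natural linear order, not $\sigma$, is the monotone quantity), a solution passing through a $\Lambda^{c}$ configuration along which $\sigma$ fails to be monotone in the sense of (ii) --- again a contradiction. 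I expect this last step, namely the case analysis over the ways the feedback loop can degenerate together with the explicit construction of the violating solution in the severed-link case, to be the principal difficulty; the probe computation behind the structural zeros and the sign conditions is elementary but does require some care with the cyclic indexing at $i=1$ and $i=n$.
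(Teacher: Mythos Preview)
Your proposal has the logical dependency backwards in the direction (i)$\Rightarrow$(ii). You treat this direction as a ``routine verification'' that ``only re-packages the content of Theorem~\ref{linear system large-constant}.'' But in this paper Theorem~\ref{linear system large-constant} is \emph{derived from} Proposition~\ref{th-monotone of NmM}, not the other way around: the paper states explicitly ``Based on Proposition~\ref{th-monotone of NmM}, we can prove Theorem~\ref{linear system large-constant},'' and the proof of Theorem~\ref{linear system large-constant} consists of a single sentence citing Proposition~\ref{th-monotone of NmM}. So invoking Theorem~\ref{linear system large-constant} here is circular. The paper's actual proof of (i)$\Rightarrow$(ii) is the substantive part: it introduces the level sets $m_h=\{x:\sigma_m(x)=h\}$, the boundary pieces $N_h=\partial m_h\cap m_h$, and a stratification $N_{h,q}$ ($q=1,\dots,\tilde n-h-1$) recording the length of the longest block of consecutive zeros; it then shows by induction on $q$ (via Lemmas~\ref{theta openset}--\ref{lem Ix cap Ix not nothing}) that a solution starting in $N_{h,q}$ enters $\operatorname{Int}m_h$ for small positive time when $t_0\in\mathcal G$, and finally handles $t_0\notin\mathcal G$ by approximating $\tilde A$ with matrices satisfying \eqref{linear sys assumption} everywhere and passing to the limit through the closed sets $\{\sigma_m\le h\}$, $\{\sigma_M\ge h\}$. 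None of this is available to you if you only cite Theorem~\ref{linear system large-constant}.

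For (ii)$\Rightarrow$(i) your probe argument is essentially the paper's, and your use of Baire to extract fixed indices $k,m$ with $\tilde a_{k,k-1}\equiv0$, $\tilde a_{m,m+1}\equiv0$ on a subinterval is a welcome clarification of a step the paper states without justification. The ``residual case'' you flag (the severed-link configuration $m=k-1$) is the one situation in which the invariant-subspace argument collapses to $W=\{0\}$; the paper does not isolate this case, and your sketch for it (``one constructs, along the lines of the classical tridiagonal examples, a solution \dots'') is not yet a proof. So on this side you have correctly located a genuine subtlety, but your treatment of it remains at the level of a plan rather than an argument.
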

\par
\begin{remark} 
{\rm
Proposition \ref{th-monotone of NmM} was proved for positive cyclic feedback system in \cite[Theorem 1]{fusco}, that is, $a_{1,n}\geq0$, $a_{n,1}\geq 0$ and $\prod_{i=1}^{n}a_{i,i-1}(t)+\prod_{i=1}^{n}a_{i,i+1}(t)>0$ in \eqref{linear sys assumption}. }
\end{remark}

Based on Proposition \ref{th-monotone of NmM}, we can prove Theorem \ref{linear system large-constant}.

\begin{proof}[ Theorem \ref{linear system large-constant}]
 By the definition of $\sigma(\cdot)$, (i)-(iv) can be drawn directly from Proposition \ref{th-monotone of NmM}.
 By (i) and (iii), $\sigma(x(t))$ only drops strictly a finite number of times with respect to $t$, so (v) is work as well. 
\end{proof}
\vskip 2mm

\begin{proof}[Proof of Corollary \ref{nonlinear system large-constant}]
  Let $z(t)=\phi(t,x)-\phi(t,y)$, then $z(t)$ satisfies the following linear equation:
\begin{equation}\label{differe-equa}
	\dot{z}(t)=\tilde A(t)z(t),
\end{equation}
where $\tilde A=(\tilde {a}_{ij})_{n\times n}$, $\tilde a_{ij}=\int_{0}^{1}\frac{\partial f_i(t,r\phi(t,x)+(1-r)\phi(t,y))}{\partial x_j}dr$. Obviously, 
\eqref{differe-equa} satisfies \eqref{linear system}, then the corollary is true based on theorem \ref{linear system large-constant}.
\end{proof}
\vskip 2mm

In the rest of this section, we prove Proposition \ref{th-monotone of NmM}. Note that if $\sigma_m(x)=\tilde{n}$, then Proposition \ref{th-monotone of NmM} is automatically established. We only consider the case that 
$\sigma_m(x)<\tilde{n}-2$. To prove Proposition \ref{th-monotone of NmM}, some lemmas are listed for preparations. 

For each $h\in\{1,3,\cdots,\tilde{n}-2\}$, 
let $m_h=\{x\in \mathbb{R}^n\backslash\{0\}| \sigma_m(x)=h\},N_h=\partial m_h\cap m_h, \theta_h=\{x|\sigma_m(x)>h\}.$

\begin{lemma}\label{theta openset}
  For each $h\in\{1,3,\cdots,\tilde{n}-2\},\theta_h $is an open set.
\end{lemma}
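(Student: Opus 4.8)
The plan is to show that the complement of $\theta_h$ is closed, i.e., that $\{x : \sigma_m(x) \le h\}$ is closed, by exploiting the upper semicontinuity that is built into the definition of $\sigma_m$. Recall $\sigma_m(x)$ is the minimum of $\sigma(y)$ over $y \in \mathcal{U} \cap \Lambda$ for a sufficiently small neighborhood $\mathcal{U}$ of $x$; equivalently, $\sigma_m(x) = \lim_{r \to 0^+} \min\{\sigma(y) : y \in B(x,r) \cap \Lambda\}$, a limit of a nondecreasing (as $r \downarrow 0$) integer-valued quantity, hence eventually constant. The key structural fact I would isolate first is the local monotonicity of $\sigma_m$ itself: if $x' \in B(x,r) \cap \Lambda$ then, for $r$ small, $\sigma_m(x) \le \sigma(x') $, and more generally $\sigma_m$ is upper semicontinuous, because any neighborhood of a nearby point $x'$ is contained (for small enough radii) in a neighborhood of $x$, so the minimum over the larger set is no larger.

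Concretely, first I would fix $x_0 \notin \theta_h$, so $\sigma_m(x_0) \le h$, and pick, using the definition, a point $y_0 \in \Lambda$ arbitrarily close to $x_0$ with $\sigma(y_0) = \sigma_m(x_0) \le h$. Since $\sigma$ is continuous on $\Lambda$ (indeed locally constant there by the remarks following Definition 3.2) and $\Lambda$ is open in the relevant sense around $y_0$ — more precisely $\mathcal{O}$ is open and dense and $\sigma$ extends continuously to $\Lambda$ — there is a whole neighborhood $V$ of $y_0$ on which $\sigma \le h$ at every point of $V \cap \Lambda$. Then for every $x$ with $x$ close enough to $x_0$ (so that $y_0$ is also "sufficiently close" to $x$ and $V$ is admissible as part of the defining neighborhood $\mathcal{U}$ of $x$), we get $\sigma_m(x) \le \sigma(y_0) \le h$, i.e. $x \notin \theta_h$. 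This exhibits a neighborhood of $x_0$ inside the complement of $\theta_h$, proving that complement is open... wait — that shows the complement is open, which is the wrong direction; instead I should argue the complement is closed. Let me restructure: I would instead show directly that $\theta_h$ is open, by taking $x_0 \in \theta_h$, so $\sigma_m(x_0) > h$, meaning $\sigma(y) > h$, hence $\sigma(y) \ge h+2$, for all $y$ in some ball $B(x_0, r_0) \cap \Lambda$. Then for any $x \in B(x_0, r_0/2)$, a sufficiently small neighborhood of $x$ sits inside $B(x_0,r_0)$, so every point of that neighborhood lying in $\Lambda$ has $\sigma \ge h+2 > h$, whence $\sigma_m(x) > h$ and $x \in \theta_h$. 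Thus $B(x_0, r_0/2) \subset \theta_h$.

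The main point requiring care — the only real obstacle — is the interplay between "$\Lambda$" and the density of $\mathcal{O}$: one must be sure that $\sigma_m(x)$ is well defined and genuinely computed as a minimum over $\mathcal{U}\cap\Lambda$, and that shrinking $\mathcal{U}$ cannot make this minimum jump; this is exactly the monotonicity-in-$r$ observation above, and it uses that $\Lambda$ (being essentially $\mathcal{O}$ together with its "good" boundary points) is dense, so $\mathcal{U}\cap\Lambda$ is always nonempty. I would state this as a preliminary observation (or cite it from the discussion after Definition 3.2) and then the openness of $\theta_h$ follows in two or three lines as above. No compactness or deep dynamics is needed here — it is purely a semicontinuity argument for the lower envelope $\sigma_m$ of the locally constant function $\sigma$.
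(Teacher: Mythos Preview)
Your final direct argument is correct and is essentially the same as the paper's proof: both hinge on the observation that $\sigma_m(x_0)>h$ means there is a ball $B(x_0,r_0)$ on which every $y\in\Lambda$ has $\sigma(y)>h$, and that any nearby point $x$ has a small neighborhood contained in $B(x_0,r_0)$, forcing $\sigma_m(x)>h$. The paper merely phrases this by contradiction (a sequence $x^n\to x_0$ with $\sigma_m(x^n)\le h$ yields $y^n\in\Lambda$ with $\sigma(y^n)\le h$ and $y^n\to x_0$, contradicting $\sigma_m(x_0)>h$), whereas you give the equivalent direct $r_0/2$-ball argument; there is no substantive difference.
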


\begin{proof}
 Suppose on the contrary that $\theta_h$ is not open. Fix $x^0\in \theta_h$ and $1\ll n\in\mathbb{N}$, then there is $x^n\in B_\frac{1}{2n}(x^0)$ ($B_\frac{1}{2n}(x^0)$ represents a open ball with center $x^0$ and radius $\frac{1}{2n}$), such that
  \begin{equation*}
    \sigma_m(x^n)\leq h.
  \end{equation*}
 For the given $x^n$, by the definition of $\sigma_m(\cdot)$, there also exists $y^n\in B_\frac{1}{2n}(x^n)\cap\Lambda$ such that
  \begin{equation*}
    \sigma(y^n)\leq h.
  \end{equation*}
Let $n\rightarrow+\infty,$ then $y^n\rightarrow x^0$, but $\sigma(y^n)\leq h$, which is a contradiction to that $\sigma_m(x^0)> h$. Thus, $\theta_h$ is an open 
set.
\end{proof}
\vskip 2mm

Let $x_{i_1},\cdots,x_{i_k},i_1<\cdots<i_k$, be those non-zero components of $x$, then 
\begin{equation*}
\begin{split}
   x\in m_h\Leftrightarrow\ &\text{the number of sign changes in the sequence}\\ &x_{i_1},\cdots,x_{i_k},-x_{i_1} \text{is } h.
\end{split}
\end{equation*}
Therefore, $x\in N_h$ if and only if $x\in m_h$ and there exists $i$ such that one of the following holds:
\begin{itemize}

\item[\rm (i)] $x_{i-1}=x_{i}=0$;

\item [\rm (ii)] $x_i=0$ and
$\left\{
\begin{aligned}
x_{i-1}x_{i+1}>0, i \neq n,1\\
x_{i-1}x_{i+1}<0, i = n,1.
\end{aligned}
\right.$
\end{itemize}

Moreover, we have the relationship between $\partial m_h$ and $N_h $ as follows.

\begin{lemma}\label{locally coincide}
  $\partial m_h, N_h$ are coincide locally, that is, for each $x\in N_h$, there exists $\delta_x>0$ such that for any $\tilde{x}\in\partial m_h$ with 
  $|\tilde{x}-x|<\delta_x$, one has $\tilde{x}\in N_h$.
\end{lemma}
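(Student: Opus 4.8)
The plan is to obtain the local coincidence as a soft consequence of the lower semicontinuity of $\sigma_m$, which is precisely the content of Lemma \ref{theta openset}. The guiding idea: $m_h$ can only fail to be locally closed along the "side'' where $\sigma_m$ jumps \emph{down}, and near a point of $N_h$ that side is absent, because $\sigma_m$ never decreases under small perturbations there.

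First I would record the elementary fact that $\sigma_m$ is lower semicontinuous with discrete range: since $\theta_{h'}=\{\sigma_m>h'\}$ is open for every admissible index $h'$ by Lemma \ref{theta openset}, and $\sigma_m\ge 1$ everywhere, for any $z_0\in\mathbb{R}^n$ there is a ball around $z_0$ on which $\sigma_m\ge\sigma_m(z_0)$. (When $\sigma_m(z_0)=h_0\ge 3$ this is because $z_0\in\theta_{h_0-2}$, which is open; when $h_0=1$ it is automatic from $\sigma_m\ge 1$ — this index bookkeeping is the only place one has to be slightly careful.)

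Next, fix $x\in N_h$; then $x\ne 0$ and $\sigma_m(x)=h$. Choose $\delta_x>0$ so small that $\mathcal U:=B_{\delta_x}(x)$ avoids the origin and satisfies $\sigma_m\ge h$ on $\mathcal U$. Let $\tilde x\in\partial m_h$ with $|\tilde x-x|<\delta_x$; I claim $\tilde x\in m_h$, which already yields $\tilde x\in\partial m_h\cap m_h=N_h$. Indeed $\tilde x\in\overline{m_h}$, so either $\tilde x\in m_h$ (done), or $\tilde x$ is the limit of a sequence $y_k\in m_h$, which we may take inside $\mathcal U$; then $\sigma_m(y_k)=h$ for all $k$. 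On one hand $\sigma_m(\tilde x)\ge h$ because $\tilde x\in\mathcal U$. On the other hand, if $\sigma_m(\tilde x)>h$ then $\tilde x$ lies in the open set $\theta_h$, forcing $y_k\in\theta_h$ hence $\sigma_m(y_k)>h$ for large $k$ — a contradiction. So $\sigma_m(\tilde x)=h$, and since $\tilde x\ne 0$ this gives $\tilde x\in m_h$, contradicting the assumption $\tilde x\notin m_h$. Hence $\tilde x\in m_h$ in all cases, and $\tilde x\in N_h$.

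There is no real obstacle here beyond invoking Lemma \ref{theta openset} correctly and remembering that $0\notin m_h$ (which is why $\mathcal U$ is taken disjoint from the origin). As an alternative one could argue combinatorially, using the explicit description of $N_h$ in terms of sign changes recalled just before the lemma: a point of $\partial m_h$ close to $x$ inherits enough of the sign pattern of $x$ that it is forced either into $m_h$ with a degeneracy of type (i) or (ii), or into $\theta_h$; this is more hands-on but longer and less transparent, so I would present the semicontinuity argument above.
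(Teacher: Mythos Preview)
Your argument is correct and follows essentially the same route as the paper's proof: both establish $\sigma_m(\tilde x)\ge h$ near $x$ and then use the openness of $\theta_h$ (Lemma \ref{theta openset}) to rule out $\sigma_m(\tilde x)>h$, forcing $\tilde x\in m_h\cap\partial m_h=N_h$. The only cosmetic difference is that the paper obtains the lower bound $\sigma_m(\tilde x)\ge h$ by the explicit sign-preservation argument on the nonzero components of $x$, whereas you extract it more abstractly as lower semicontinuity of $\sigma_m$ from the openness of $\theta_{h-2}$.
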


\begin{proof}
Choose $x\in N_h$ and let $x_{i_1},\cdots,x_{i_k}$($i_1<\cdots<i_k$) be non-zero components of $x$. Then there exists $\delta_0>0$, such that for each 
  $\tilde{x}\in\mathbb{R}^n$ with $|\tilde{x}-x|<\delta_0$ one has $\tilde{x}_ {i_1},\cdots,\tilde{x}_{i_k}$ share the same signs with
  $x_{i_1},\cdots,x_{i_k}$. Thus, $\sigma_m(\tilde{x})\geq h$. 
  
Next, we prove that if $\tilde{x}\in B_{\delta_0}(x)\cap\partial m_h$, then
  \begin{equation*}
   \sigma_m(\tilde{x})= h\Leftrightarrow\tilde{x}\in N_h.
  \end{equation*}
  Indeed, if this is not correct, then 
  \begin{equation*}
      \sigma_m(\tilde{x})> h.
  \end{equation*}
Note that $\theta_h$ is an open set(see Lemma \ref{theta openset}), there exists $\tilde{\delta}>0$, such that 
  \begin{equation*}
    \sigma_m(y)> h,
  \end{equation*}
for any $y\in B_{\tilde{\delta}}(\tilde{x})$, a contradiction to that $\tilde{x}\in\partial m_h$. Thus, $ \sigma_m(\tilde{x})= h$ and $\tilde{x}\in N_h$.
\end{proof}

Let $N_{h,0}=\operatorname{Int} m_h$(interior of $m_h$). For $q=1,\cdots,\tilde{n}-h-2$, define $N_ {h,q}\subset m_h$ be the subset of vectors $x\in m_h$ satisfy
\begin{equation*}
\begin{aligned}
 &x\notin N_ {h,p},\quad p=0,\cdots,q-1;\\
 &x_i=\cdots=x_{i+q}=0\Rightarrow
\left\{
\begin{aligned}
x_{i-1}x_{i+q+1}<0, \quad i+q+1\leq n,\\
x_{i-1}x_{i+q+1-n}>0, \quad  i+q+1>n;
\end{aligned}
\right.
\end{aligned}
\end{equation*}
for $q=\tilde{n}-h-1$, define
\begin{equation*}
  N_{h,\tilde{n}-h-1}= N_h\setminus\bigcup\limits_{p=1}^{\tilde{n}-h-2}N_{h,p}\Leftrightarrow N_h=N_{h,1}\cup\cdots\cup N_{h,\tilde{n}-h-1}.
\end{equation*}
It is not hard to see that if $x\in N_{h,q}$, $x'\in N_h$ with $|x-x'|\ll 1$, then $ x'\in N_{h,p}$ for some $p\leq q$ .

Back to \eqref{linear system}+\eqref{linear sys assumption}, without loss of generality, we may always assume that \eqref{linear sys assumption} is the following special form:
\begin{equation*}
  \begin{aligned}
   a_{1,n}&<0,  a_{n,1}\leq 0\\
    a_{i,i-1}(t)&>0,\quad 2\leq i\leq n,\\
    a_{i,i+1}(t)&\geq 0,\quad 1\leq i\leq n-1 .
  \end{aligned}
\end{equation*}
 
Let $\varphi(t,x)$ be a solution of \eqref{linear system}+\eqref{linear sys assumption}. For each $q\leq \tilde{n}-h-2$, let $x\in N_{h,q}$ be the point such that if $x_ {i-1}\neq 0, x_i=0$,
then one of the following holds:
\begin{itemize}
\item[\rm{(a)}] $i=1, x_nx_2\leq 0$,
$$
x_n\dot{\varphi}(t,x)_1=x_n(a_{1,1}x_1+a_{1,2}x_2+a_{1,n}x_n)=a_{1,n}x_n^2+a_{1,2}x_nx_2<0;
$$
\item[\rm{(b)}]  $i\in \{2,3,\cdots,n-1\}, x_{i-1}x_{i+1}\geq 0$,
$$ 
x_{i-1}\dot{\varphi}(t,x)_i=x_{i-1}(a_{i-1,i}x_{i-1}+a_{i,i}x_i+a_{i,i+1}x_{i+1})>0;
$$
\item[\rm{(c)}]  $i=n, x_{n-1}x_1\leq 0$,
$$
x_{n-1}\dot{\varphi}(t,x)_n=x_{n-1}(a_{n,1}x_1+a_{n,n-1}x_{n-1}+a_{n,n}x_n)>0.
$$
\end{itemize}

For any $x\in N_h$, let $I_x\subset \{1,\cdots,n\}$ be the set of indices with the above property. Then $I_x$ is non-empty; moreover, we have the following

\begin{lemma}\label{lem Ix cap Ix not nothing}
  Assume that $x, x'\in N_{h,q}$ for some $q\in\{0,\cdots,\tilde{n}-h-1\}$. If $ |x-x'|\ll1$, then
  \begin{equation*}
    I_x\cap I_{x'}\neq\varnothing.
  \end{equation*}
\end{lemma}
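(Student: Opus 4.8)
The plan is to argue by contradiction using the finiteness and stability of the index set $I_x$ under small perturbations. First I would fix $x\in N_{h,q}$ and enumerate its vanishing components; the key observation is that for $x'$ close to $x$, every index $i$ with $x'_i=0$ must either correspond to an index where $x$ itself vanishes, or lies ``next to'' such an index, because the nonzero components of $x$ stay bounded away from zero on a ball $B_{\delta_0}(x)$ (as already used in the proof of Lemma \ref{locally coincide}). Thus the vanishing pattern of $x'$ is, in a precise combinatorial sense, a ``refinement'' of that of $x$: each maximal run of consecutive zeros of $x'$ is contained in the union of a maximal run of zeros of $x$ together with at most its two neighboring slots, and since $x'\in N_{h,q}$ with $q$ the same level, the runs of $x'$ cannot be strictly longer than those allowed at level $q$.

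The heart of the argument is then to track what happens to an index $i\in I_x$. Say $x_{i-1}\neq 0$, $x_i=0$, so one of the strict inequalities (a),(b),(c) holds at $x$ — note these are \emph{strict} inequalities in the entries $x_{i-1}, x_{i+1}$ (or $x_n, x_2$ resp.) together with the coefficients $a_{\cdot,\cdot}(t)$, which are themselves continuous and, on the relevant off-diagonal, strictly signed by the normalized form of \eqref{linear sys assumption} we reduced to. The issue is that $x'_{i-1}$ need not vanish nor even be the ``first'' nonzero entry before a zero block of $x'$; so I would instead look at the zero block of $x$ containing position $i$, say $x_j = \cdots = x_{i-1}^{+} $ — more carefully, let $i_-$ be the largest index $\le i-1$ with $x_{i_-}\neq 0$ (so $i_- = i-1$) and let $i_+$ be the smallest index $> i$ with $x_{i_+}\neq 0$. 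For $x'$ near $x$, the entries $x'_{i_-}$ and $x'_{i_+}$ are nonzero with the same signs as for $x$, and any zero block of $x'$ straddling position $i$ lies within $\{i_-+1,\dots,i_+-1\}$; its left nonzero neighbor is $x'_{i_-}$ and its right nonzero neighbor is $x'_{i_+}$, so the defining condition for that block at level $q$ forces the sign relation between $x'_{i_-}$ and $x'_{i_+}$ to be exactly the one dictated by $N_{h,q}$ — which is the same sign relation $x$ satisfies. Hence the same case (a)/(b)/(c), now evaluated at $x'$ with the index $i'$ = (the first zero of the block) or with $i'=i_-+1$, yields the required strict inequality for $x'$ by continuity of the coefficients and of the entries, provided $|x-x'|$ is small enough. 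This shows some index near $i$ lies in $I_{x'}$; a slightly more careful bookkeeping shows one may take the \emph{same} index $i\in I_{x'}$ when $x'_{i-1}\neq 0$ and $x'_i=0$, and otherwise exhibits a common index in both $I_x$ and $I_{x'}$ coming from the shared block structure.

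The main obstacle I anticipate is precisely this bookkeeping at the boundary of zero blocks: when $x'$ has \emph{fewer} zeros than $x$ in a given block (some $x'_k$ with $x_k=0$ becomes nonzero), the index realizing the inequality can shift, and one must check that the shifted index still belongs to $I_x$ as well, not merely to $I_{x'}$. To handle this I would argue that among the finitely many indices in $I_x$, the ``interior'' zero positions of each block all satisfy the same inequality simultaneously (the inequalities (a)–(c) only involve the two nonzero endpoints of the block and the strictly signed coefficients, not the interior position itself), so there is always a choice of index that survives in both sets. The wrap-around cases $i=1$ and $i=n$ require separately noting that the cyclic sign convention in the definitions of $N_{h,q}$ and of (a),(c) is consistent, which is exactly how $\delta_1=-1$ enters; this is routine once the linear (non-cyclic) cases are in hand. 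Finiteness of $\{1,\dots,n\}$ guarantees we only make finitely many smallness requirements on $|x-x'|$, so a single $\delta_x>0$ works.
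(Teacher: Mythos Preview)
The paper's proof is a short contradiction argument, quite different from your constructive plan. It assumes $I_x\cap I_{x'}=\varnothing$, fixes any $i\in I_x$, and shows directly that $x'_i\neq 0$: since $x_{i-1}\neq 0$ one has $x'_{i-1}\neq 0$ with the same sign, so if $x'_i=0$ then (because $i\notin I_{x'}$) the sign condition (a)/(b)/(c) must \emph{fail} for $x'$ at index $i$. But either $x_{i+1}\neq 0$, in which case $x'_{i+1}$ keeps its sign and the condition cannot fail, or $x_{i+1}=0$, in which case the failed inequality at $x'$ creates an extra cyclic sign change, forcing $\sigma_m(x')>h$ and contradicting $x'\in m_h$. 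Thus $x'_i\neq 0$ for every $i\in I_x$, i.e.\ every left endpoint of a zero block of $x$ is filled in for $x'$; this drops $x'$ to a strictly lower stratum $N_{h,p}$ with $p<q$, contradicting $x'\in N_{h,q}$.

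Your plan has a genuine gap exactly where you anticipate it, and the proposed fix rests on a misreading of $I_x$. By definition $I_x$ consists \emph{only} of left endpoints of zero blocks (indices $i$ with $x_{i-1}\neq 0$ and $x_i=0$); interior zero positions are never in $I_x$. Each zero block therefore contributes exactly one index to $I_x$, and there is no ``choice of index that survives in both sets''. Concretely: if $x'$ fills in $x'_i\neq 0$ while keeping $x'_{i+1}=0$ (with $x_{i+1}=0$), then the new left endpoint $i+1$ lies in $I_{x'}$ but not in $I_x$ (since $x_i=0$), and nothing in your outline manufactures a common index in this situation. The resolution is not local index-matching but the global observation that filling in \emph{all} left endpoints is incompatible with $x'$ staying at level $q$ --- which is precisely the paper's route.
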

\begin{proof}
  Choose $x\in N_{h,q}$ and let $x_{j1},\cdots,x_{jk}$ be non-zero components of $x$. Then for $x'\in N_{h,q}$ with $|x-x'|\ll1$, one has 
 $x'_{j1},\cdots,x'_{jk}$  keep the same signs with $x_{j1},\cdots,x_{jk}$. Suppose that $I_x\cap I_{x'}=\varnothing$ and let $i\in I_x$,  then $\ i\notin I_{x'}, 
  \ x_{i-1}x'_{i-1}>0$. 
  
(a) $i=1$, then $x'_1\neq0$ or $x'_nx'_2>0$. If $x'_nx'_2>0, x_2=0$, then there exists sign change in $x'_n,-x'_2$, which means
  \begin{equation*}
    \sigma_m(x')>h,
  \end{equation*}
 a contradiction to $x'\in N_{h,q}$. If $x'_nx'_2>0, x_2\neq0$, however, this cannot happen due to $x_nx_2<0$. Thus,  $x'_i\neq0$.
 
  Similar arguments for (b): $i\in\{2,3,\cdots,n-1\}$ and (c): $i=n$.
  
As a consequence, for any $i\in I_x $, there is $x'_i\neq0$,  a contradiction to that $x'\notin N_{h,q}$. Thus, $ I_x\cap I_{x'}\neq\varnothing.$
\end{proof}

We now prove Proposition \ref{th-monotone of NmM}.

\begin{proof}[Proof of Proposition \ref{th-monotone of NmM}]
``(i) $\Rightarrow $ (ii)''

Assume that $t_0\in\mathcal{G}$, we first prove that
  $$
  \sigma_m(x_0)=\sigma(\tilde\varphi(t_0+\epsilon,x_0))
  $$
for $0<\epsilon\ll1$. And this equivalents to show that: for each $x^0\in m_h$($h\in\{1,3,\cdots,\tilde n-2\}$), there 
  exists $\epsilon>0$ such that for all $t\in(t_0,t_0+\epsilon)$,
  \begin{equation}\label{intmh}
   \tilde\varphi(t,x)\in \operatorname{Int} m_h\Leftrightarrow\tilde\varphi(t,x)\in\Lambda, \sigma(\tilde\varphi(t,x))=h.
  \end{equation}

Given $q\in \{1,\cdots,\tilde{n}-h-1\}$ and $x\in N_{h,q}$. For any $i\in I_x$, one has
  \begin{equation*}
  \begin{cases}
    x_{i-1}\dot{\tilde\varphi}(t_0,x)_i>0, &  i\in\{2,3,\cdots,n\}, \\
    x_{i-1}\dot{\tilde\varphi}(t_0,x)_i<0, &  i=1.
  \end{cases}
\end{equation*}

Let $0<t-t_0\ll1, \tilde{x}=\tilde\varphi(t,x)$, then $\tilde{x}_i\neq0$ for all $i\in I_x$. Therefore, $I_x\cap I_{\tilde{x}}=\varnothing$. If $q=1$, then 
$\tilde{x}\in \Lambda$, and \eqref{intmh} is established. For $q\in\{2,3,\cdots,\tilde{n}-h-1\}$, suppose that \eqref{intmh} is established for all 
$p<q$. We then prove that \eqref{intmh} is established for $p=q$.
\par
Suppose on the contrary that there exists $x\in N_{h,q}$ with no $\epsilon>0$ satisfies
$$ \tilde\varphi(t,x)\in \operatorname{Int} m_h, t\in(t_0,t_0+\epsilon).$$
Note that 
$$
I_x\cap I_{\tilde\varphi(t,x)}=\varnothing,\, \sigma_m(\tilde\varphi(t,x))\geq h,
$$
for $0<t-t_0\ll1$. By virtue of Lemma \ref{lem Ix cap Ix not nothing}, $\tilde\varphi(t,x)\notin N_{h,q}$. Observe that $\tilde\varphi(t,x)\notin \cup_{i=q+1}^{n-h-1}N_{h,i}$, 

\begin{equation*}
  \sigma_m(\tilde\varphi(t,x))>h, \quad  0<t-t_0\ll1.
\end{equation*}
In other words, there exists $0<\epsilon\ll1$ such that
\begin{equation*}
  \sigma_m(\tilde\varphi(t,x))>h,\quad t\in(t_0,t_0+\epsilon).
\end{equation*}
By Lemma \ref{theta openset} and the continuity of $\tilde\varphi(t,x)$, one can choose $y\in\bigcup\limits_{i=1}^{q-1}N_{h,i}$ with $|y-x|\ll1$, such that
\begin{equation}\label{Nmy>h}
  \sigma_m(\tilde\varphi(t_0+\frac{\epsilon}{2},y))>h.
\end{equation}
Since $y\in\bigcup\limits_{i=1}^{q-1}N_{h,i}$, there exists $0<\epsilon_y<\frac{\epsilon}{2}$ such that

\begin{equation}\label{phi in intmh}
\tilde\varphi(t,y)\in \operatorname{Int}m_h,\quad t\in(t_0,t_0+\epsilon_y) .
\end{equation}
Combine \eqref{Nmy>h}, \eqref{phi in intmh} and the continuity of $\tilde\varphi$, there are $\bar{t}\in(t_0+\epsilon_y,t_0+\frac{\epsilon}{2}), \delta>0$, such that
\begin{equation}\label{bar t}
  \tilde\varphi(\bar{t},y)\in \partial m_h, \sigma_m(\tilde\varphi(t,y)>h, t\in(\bar{t},\bar{t}+\delta).
\end{equation}
It then follows from Lemma \ref{locally coincide} that 
\begin{equation*}
  \tilde\varphi(\bar{t},y)\in N_h.
\end{equation*}
Thus, \eqref{bar t} implies that $\tilde\varphi(\bar{t},y)\in N_{h,q} $.

We now further restrict $y$ to satisfies:
\begin{equation*}
\begin{aligned}
  (i)\quad x_{i-1}y_i>0,\quad &\text{ if }\ i\in I_x\cap\{2,3,\cdots,n\}, \\
  (ii)\quad x_{i-1}y_i<0,\quad &\text{ if }\ i\in I_x\cap\{1\}.
\end{aligned}
\end{equation*}
Recall that $I_{\phi(\bar{t},y)}\cap I_x\neq \varnothing$, {\it we claim that: there exist $i_0\in I_x, \bar{\bar{t}}\in (t_0,\bar{t}]$ such that

\begin{equation*}
\begin{aligned}
x_{i_0-1}\dot{\tilde\varphi}(\bar{\bar{t}},y)_{i_0}\leq0,\quad &\text{ if }\ i_0\neq1,\\
x_{i_0-1}\dot{\tilde\varphi}(\bar{\bar{t}},y)_{i_0}\geq0,\quad &\text{ if }\ i_0=1. 
 \end{aligned}
\end{equation*}}
Indeed, if the claim is not hold, then for any $i\in I_x, t\in  (t_0,\bar{t}]$, one has
\begin{equation*}
  \begin{cases}
    x_{i-1}\dot{\tilde\varphi}(t,y)_i>0, &  i\in\{2,3,\cdots,n\}, \\
    x_{i-1}\dot{\tilde\varphi}(t,y)_i<0, &  i=1.
  \end{cases}
\end{equation*}
And hence, $\tilde\varphi(t,y)_i$ is monotone with respect to $t$, that is, $\phi(t,y)_i\neq0$. Thus,  $I_x\cap I_{\phi(\bar t,y)}=\varnothing$, which is a contradiction.
The claim is proved.

By continuity of $\tilde\varphi$, for $i\in I_x$, one has:
\begin{equation*}
\begin{aligned}
(i)\quad x_{i-1}\dot{\tilde\varphi}(t,x)_i>0,\quad &\text{ if }\ i\in\{2,3,\cdots,n\},0<t-t_0\ll1,\\
(ii)\quad x_{i-1}\dot{\tilde\varphi}(t,x)_i<0,\quad &\text{ if }\ i=1,0<t-t_0\ll1. 
 \end{aligned}
\end{equation*}
By continuity of $\dot{\tilde\varphi}$, we can further get $i\in I_x$:
\begin{equation*}
\begin{aligned}
(i)\quad x_{i-1}\dot{\tilde\varphi}(t,y)_i>0,\quad &\text{ if } i\in\{2,3,\cdots,n\},0<t-t_0\ll1,\\
(ii)\quad x_{i-1}\dot{\tilde\varphi}(t,y)_i<0,\quad &\text{ if }\ i=1,0<t-t_0\ll1. 
 \end{aligned}
\end{equation*}
Choose $t=\bar{\bar{t}}$, then it is a contradiction to the claim. 

Consequently, \eqref{intmh} is also correct for $x\in N_{h,q}, t\in \mathcal{G}$.

Let $M_h=\{x\in \mathbb{R}^n\backslash\{0\}| \sigma_M(x)=h\},\tilde{N}_h=\partial M_h\cap M_h$. By appropriate modification of the above proof, one can obtain that for each $x^0\in M_h$, there 
  exists $\epsilon_0>0$ such that for all $t\in(t_0-\epsilon_0, t_0)$,
  \begin{equation}\label{intMh}
    \tilde\varphi(t,x)\in \operatorname{Int} M_h\Leftrightarrow\tilde\varphi(t,x)\in\Lambda, \sigma(\tilde\varphi(t,x))=h.
  \end{equation}
  
To complete the proof of (i) $\Rightarrow$ (ii), it is sufficient to check that \eqref{intmh} and \eqref{intMh} also hold for some $\epsilon_0>0$, when $t_0\in (a,b)\setminus \mathcal{G}$. For this purpose, fix ${\rd \epsilon_0}>0$ and let $\tilde A^k(t)$ be a sequence of continuous functions in the form of \eqref{linear sys assumption}, such that $\tilde A^k(t)\rightarrow \tilde A(t)$ uniformly in $[t_0-\epsilon_0, t_0+\epsilon_0]$, as $k\rightarrow \infty$. Let $\tilde\varphi^k(t,x)$ be the solution of $\dot{x}=\tilde A^k(t)x$ through $x$ at time $t_0$. 

By the Gronwall inequality, $\tilde\varphi^k(t,x)\rightarrow \tilde\varphi(t,x)$ as $k\rightarrow \infty$, uniformly in $[t_0-\epsilon_0, t_0+\epsilon_0]$. As we have proven that: for $\dot{x}=\tilde A^k(t)x$, if $x\neq0, x\notin \Lambda, t_1 \in [t_0-\epsilon_0,t_0]$,$t_2\in [t_0, t_0+\epsilon_0]$, then 

\begin{equation}\label{sigmam phik leq sigmaM phik}
  \sigma_m(\tilde\varphi^k(t_2,x))\leq \sigma_m(x)<\sigma_M(x)\leq \sigma_M(\tilde\varphi^k(t_1,x)).
\end{equation}
Note that $\{y \in \mathbb{R}^n|\sigma_m(x^{'})\leq \sigma_m(x)\}$ and $\{y \in \mathbb{R}^n|\sigma_M(x^{'})\geq \sigma_M(x)\}$  are closed sets,
\begin{equation}\label{sigmam phi leq sigmaM phi}
  \sigma_m(\tilde\varphi(t_2,x))\leq \sigma_m(x)<\sigma_M(x)\leq \sigma_M(\tilde\varphi(t_1,x)).
\end{equation}
{\it We claim that: one can choose $\epsilon_0$ small enough, such that
\begin{equation}\label{simple-1}
  \tilde\varphi(t_0-\epsilon,x),\tilde\varphi(t_0+\epsilon,x)\in \Lambda,
\end{equation}
and
\begin{equation}\label{simple-2}
  \sigma_m(x)=\sigma(\tilde\varphi(t_0+\epsilon,x))<\sigma(\tilde\varphi(t_0-\epsilon,x))=\sigma_M(x),
\end{equation}
for all $0< \epsilon < \epsilon_0$}. Suppose on the contrary that, there exists $\epsilon_n\to 0$ such that
\[
\tilde\varphi(t_0-\epsilon_n,x)\notin \Lambda.
\]
For simplicity, we assume that $\epsilon_n \downarrow 0$, and note that $\mathcal{G}$ is an open and dense subset of $(a,b)$, there exist $t_n\in(t_0-\epsilon_{n-1},t_0-\epsilon_{n})$ such that $\tilde \varphi(t_n,x)\in \Lambda$; moreover, by \eqref{sigmam phi leq sigmaM phi} 
\[
\begin{split}
\sigma(\tilde\varphi(t_{n+1},x))&=\sigma_m(\tilde\varphi(t_{n+1},x))\leq \sigma_m(\tilde\varphi(t_0-\epsilon_{n},x))\\
&<\sigma_M(\tilde\varphi(t_0-\epsilon_{n},x))\leq \sigma_M(\tilde\varphi(t_{n},x))\\
&=\sigma(\tilde\varphi(t_{n},x))
\end{split}
\]
which is a contradiction to that $\sigma(\cdot)$ is a finite valued function. Thus, one can choose $\epsilon_0$ small enough such that 
\[
\tilde \varphi(t_0-\epsilon,x)\in\Lambda,\quad 0<\epsilon<\epsilon_0.
\]
Similarly arguments can also lead to
\[
\tilde \varphi(t_0+\epsilon,x)\in\Lambda,\quad 0<\epsilon<\epsilon_0.
\]
It is not hard to see that there is $h\in\{1,3\cdots,\tilde n\}$ such that
\[
\sigma(\tilde\varphi(t_0+\epsilon,x))=h,\quad  0<\epsilon<\epsilon_0.
\]
By \eqref{sigmam phi leq sigmaM phi}, $h\leq \sigma_m(x)$. Note that $\tilde\varphi(t_0+\epsilon,x)\to x$, as $\epsilon\to 0$, by the definition of $\sigma_m(\cdot)$, $h\geq \sigma_m(x)$. As a consequence,
\[
\sigma(\tilde\varphi(t_0+\epsilon,x))=\sigma_m(x),\quad  0<\epsilon<\epsilon_0.
\]
Similarly, one can also get
\[
\sigma(\tilde\varphi(t_0-\epsilon,x))=\sigma_M(x),\quad  0<\epsilon<\epsilon_0.
\]
We have checked  \eqref{simple-2}, and the claim is proved.
\vskip 3mm

``(ii) $\Rightarrow$ (i)''.

Choose $x^0\in \mathbb{R}^n$ be such that $x^0_i=0$, $x^0_j\neq 0$, $j\neq i$ and
\begin{equation*}
\begin{cases}
    x^0_{i-1}x^0_{i+1}>0, \text{ if } 2\leq i\leq n-1 \\
   x^0_{i-1}x^0_{i+1}>0, \text{ if } i=1,n\quad (x^0_0=x^0_n, x^0_{n+1}=x^0_1)
  \end{cases}   
\end{equation*}

Then, $x^0\notin \Lambda$ and $\sigma_m(x^0)=\sigma_M(x^0)-2$.

Let $y\in \mathbb{R}^n$ be with $|y-x^0|\ll1$, then $y\in \Lambda$, if and only if $y_i\neq0$, and one of the following holds true:
\begin{enumerate}
    \item[{\rm(a)}] if $2\leq i \leq n,\quad \left\{
\begin{aligned}
\sigma(y)=\sigma_m( x^0)\Leftrightarrow x^0_{i-1}y_i>0,\\
\sigma(y)=\sigma_M(x^0)\Leftrightarrow x^0_{i-1}y_i<0;
\end{aligned}
\right.$
    \item[{\rm(b)}] if $i=1,\quad \left\{
\begin{aligned}
\sigma(y)=\sigma_m( x^0)\Leftrightarrow x^0_{i-1}y_i<0,\\
\sigma(y)=\sigma_M( x^0)\Leftrightarrow x^0_{i-1}y_i>0.
\end{aligned}
\right.$
\end{enumerate}
Assume that $\tilde\varphi (t_0,x)=x$, then
\begin{equation*}
\begin{cases}
 (\tilde A(t_0)x^0)_ix^0_{i-1}\geq 0,\quad 2\leq i\leq n,\\
 (\tilde A(t_0)x^0)_ix^0_{i-1}\leq 0,\quad i=1.
\end{cases}
\end{equation*}
That is,
\begin{equation*}
\begin{cases}
 x^0_{i-1}(\tilde a_{i,i-1}(t_0)x^0_{i-1}+\tilde a_{i,i+1}(t_0)x^0_{i+1}+\sum\limits_{j\notin \mathcal{I}_i}\tilde a_{ij}(t_0)x^0_j)\geq 0,
    i\in \{2,\cdots,n\}\\
 x^0_{i-1}(\tilde a_{i,i-1}(t_0)x^0_{i-1}+\tilde a_{i,i+1}(t_0)x^0_{i+1}+\sum\limits_{j\notin \mathcal{I}_i}\tilde a_{ij}(t_0)x^0_j)\leq 0,
    i=1.
\end{cases}
\end{equation*}
(where $\mathcal{I}_i=\{i-1,i+1\}$), which implies that for all $ t\in (a,b)$,
 \par
\begin{equation*}
\left\{
\begin{aligned}
    &\tilde a_{ij}(t)=0,\quad \text{for}\ 1<|i-j|<n-1, \\
    &\tilde a_{i,i+1}(t)\geq0,\quad \text{for}\ i=1,\cdots,n-1\\
    &\tilde a_{i-1,i}(t)\geq0,\quad \text{for}\ i=2,\cdots,n\\
    &\tilde a_{1,n}(t),\tilde a_{n,1}(t)\leq0
\end{aligned}
\right.
\end{equation*}
\vskip 3mm

Let $\mathcal{G}$ be the subset of $(a,b)$ such that
\begin{equation*}
  \prod_{i=1}^{n}\tilde a_{i,i-1}(t)+\prod_{i=1}^{n}\tilde a_{i,i+1}(t)<0.
\end{equation*}
Then, by continuity of $\tilde A(t)$, $\mathcal{G}$ is open. Suppose that $\mathcal{G}$ is not dense, then there is an interval $(\overline{a},\overline{b})\subset(a,b)$ and indices $h,k$ 
such that 
$$
\tilde a_{h,h-1}(t)=\tilde a_{k,k+1}(t)=0,\quad t\in(\overline{a},\overline{b}).
$$ 
Define $I_{hk}$ in the following:
\begin{equation*}
  \mathcal{I}_{hk}=\left\{
  \begin{aligned}
    &\{i|h\leq i\leq k\},\quad \text{if}\ h\leq k,\\
    &\{i|i>h\ or\ i \leq k\},\quad \text{if}\ h>k.
  \end{aligned}
  \right.
\end{equation*}
Choose $x\in\mathbb{R}^n\setminus \{0\}$ be such that $x_j=0$, for all $j\in \mathcal{I}_{hk}$. Then,
\begin{equation*}
  \tilde\varphi(t,x)_j=0,\quad j\in \mathcal{I}_{hk},\ t\in(\overline{a},\overline{b}),
\end{equation*}
a contradiction to (ii). This proves (ii) $\Rightarrow$ (i).

The proof of Proposition \ref{th-monotone of NmM} is completed.

\end{proof}
\par

\section{Embedding property of limit sets}
As an application of $\sigma(\cdot)$, we here give a characterization for the asymptotic behaviors of bounded solutions of \eqref{system 1.1}+\eqref{assume simple system 1.1}.

Let $\phi(t,x^0)$ be a solution of \eqref{system 1.1}+\eqref{assume simple system 1.1} with an initial value $x^0$. Denote $P(x^0)=\phi(T,x^0)$ be the associated Poincar\'{e} map. Assume $O^{+}(x^0)=\{y|P^n(x^0),n\in \mathbb{N}\}$ is a bounded set in $\mathbb{R}^n$. Then, our main results in this section are the following

\begin{theorem}\label{changzhixing-th}
  For any $x,y \in \omega^{p}(x^{0}) \ (x\neq y)$ and $t\in \mathbb{R}$, one has
\begin{equation*}\label{eq-changzhixing}
\phi(t,x)-\phi(t,y)\in\Lambda \ \text{and }\sigma(\phi(t,x)-\phi(t,y))\equiv constant.
  \end{equation*}
\end{theorem}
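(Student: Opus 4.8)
~

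The plan is to exploit the integer-valued Lyapunov function $\sigma(\cdot)$ together with the structure of $\omega$-limit sets under the Poincaré map $P$. Fix two distinct points $x,y\in\omega^p(x^0)$. Since $\omega^p(x^0)$ is compact and $P$-invariant (indeed $P$ is a homeomorphism on it), the full orbits $\{P^n(x)\}_{n\in\ZZ}$ and $\{P^n(y)\}_{n\in\ZZ}$ stay in $\omega^p(x^0)$, and the corresponding solutions $\phi(t,x)$, $\phi(t,y)$ are defined and bounded for all $t\in\RR$ (backward boundedness comes from invariance of the compact set). Hence Corollary \ref{nonlinear system large-constant} applies to the pair $\phi(t,x),\phi(t,y)$: the difference $z(t)=\phi(t,x)-\phi(t,y)$ lies in $\Lambda$ off a discrete set of times, $\sigma(z(t))$ is locally constant there, it can only jump \emph{down} as $t$ increases, and by part (v) there are constants $C_1\le C_2$ with $\sigma(z(t))=C_1$ for $t\ge t_0$ and $\sigma(z(t))=C_2$ for $t\le -t_0$. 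The whole game is to prove $C_1=C_2$, for then $\sigma(z(t))$ never jumps, so it is globally constant and $z(t)\in\Lambda$ for \emph{all} $t$.

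The key mechanism is a recurrence/compactness argument. Because $x\in\omega^p(x^0)$, there is a sequence $n_i\to+\infty$ with $P^{n_i}(x^0)\to x$; more importantly, by a standard diagonal argument one finds $m_i\to+\infty$ with $P^{m_i}(x)\to x$ \emph{and} $P^{m_i}(y)\to y$ simultaneously (using compactness of $\omega^p(x^0)$ and continuity of $P$: pass to a subsequence so that $P^{m_i}$ converges on the pair). Translating to the flow, $\phi(m_iT,x)\to x$ and $\phi(m_iT,y)\to y$, so $z(m_iT)\to z(0)=x-y\ne 0$. Now I would compare $\sigma$ along the shifted trajectories. By the jump-down property, $\sigma(z(t))$ is nonincreasing in $t$ on the set where it is defined; for large $i$ we have $m_iT\ge t_0$, so $\sigma(z(m_iT))=C_1$. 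On the other hand, since $z(0)=x-y$ and $z(m_iT)\to z(0)$, and since $\sigma$ (extended via $\sigma_m,\sigma_M$) is lower/upper semicontinuous in the appropriate sense, the limiting value of $\sigma$ along $z(m_iT)$ controls $\sigma$ near $z(0)$ from above: $C_1\ge \sigma_M(z(0))\ge \sigma(z(\,-\,t))$ for small $t>0$ wait — more carefully, one shows $\sigma_M(x-y)\le C_1$. Symmetrically, using recurrence in backward time (or the fact that $P^{-1}$ is also a homeomorphism of $\omega^p(x^0)$, so $P^{-m_i}(x)\to x$, $P^{-m_i}(y)\to y$ along a subsequence), one gets $\sigma_m(x-y)\ge C_2$. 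Combining with the trivial $\sigma_m(x-y)\le \sigma_M(x-y)$ and with $C_1\le C_2$ yields $C_1=\sigma_m(x-y)=\sigma_M(x-y)=C_2$; in particular $x-y\in\Lambda$ and the common value is $C_1=C_2$.

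Having fixed $C:=C_1=C_2$, Corollary \ref{nonlinear system large-constant}(iii) forbids any downward jump (a jump would force $C_1<C_2$), and part (i)–(ii) then give $z(t)\in\Lambda$ with $\sigma(z(t))$ locally constant on all of $\RR$; a connectedness argument on $\RR$ finishes $\sigma(z(t))\equiv C$. To get the statement for \emph{all} pairs and all $t$ as phrased, note the argument was for arbitrary $x\ne y$ in $\omega^p(x^0)$, and $\phi(t,x)-\phi(t,y)$ for any fixed $t$ is again a difference of two bounded full solutions through points whose time-$t$ images lie on full orbits inside the compact invariant set, so the same conclusion applies verbatim.

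I expect the main obstacle to be the semicontinuity bookkeeping in the compactness step: one must correctly relate $\lim_i \sigma(z(m_iT))$ to $\sigma_m(z(0))$ and $\sigma_M(z(0))$ using that $z(m_iT)$ need not lie in $\Lambda$ but converges to $z(0)$, and then chain the inequalities $\sigma_M(x-y)\le C_1\le C_2\le\sigma_m(x-y)\le\sigma_M(x-y)$ without circularity. The monotone (jump-down) behavior of $\sigma$ from Corollary \ref{nonlinear system large-constant} is exactly what makes the sandwich collapse, so the role of part (v) and part (iii) there is essential; the rest is routine invariance and continuity of the Poincaré map on a compact $\omega$-limit set.
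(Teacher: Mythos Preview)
Your argument has a genuine gap at the step where you claim, ``by a standard diagonal argument,'' the existence of $m_i\to+\infty$ with $P^{m_i}(x)\to x$ and $P^{m_i}(y)\to y$ simultaneously. Compactness of $\omega^p(x^0)$ only lets you extract a subsequence along which the pair $(P^{m_i}(x),P^{m_i}(y))$ converges to \emph{some} limit $(\bar x,\bar y)$; there is no reason this limit is $(x,y)$. In fact $x\in\omega^p(x^0)$ does not even imply $x\in\omega^p(x)$ --- points of an $\omega$-limit set need not be recurrent --- and even when both $x$ and $y$ are individually recurrent, recurrence of the pair under $P\times P$ along a common sequence is not a general fact. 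The same objection applies verbatim to your backward-time claim. Without simultaneous recurrence the sandwich you write down cannot be established, and the proof collapses. (As a secondary issue, even granting recurrence your semicontinuity directions are off: $z(m_iT)\to z(0)$ with $\sigma(z(m_iT))=C_1$ gives only $\sigma_m(z(0))\le C_1\le\sigma_M(z(0))$, not $\sigma_M(z(0))\le C_1$; one has to evaluate at a nearby time $t$ with $z(t)\in\Lambda$ and use continuity of $\sigma$ there.)

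The paper's proof is organized precisely to avoid this obstruction, and the structure is not cosmetic. It first (Proposition~\ref{prop1}) treats pairs $x,y$ in the closure of a single orbit $\overline{O(\tilde x)}$: here a \emph{single} sequence $P^{l_k}(x^0)\to\tilde x$, coming from the definition of $\omega^p(x^0)$, makes $P^{l_k+m}(x^0)$ and $P^{l_k+n}(x^0)$ approximate $x=P^m(\tilde x)$ and $y=P^n(\tilde x)$ along the \emph{same} $l_k$ --- the only situation in which simultaneity comes for free. For general $x,y\in\omega^p(x^0)$ the paper then splits on whether $x\in\alpha^p(x)$ and $y\in\alpha^p(y)$. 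The non-recurrent case (Proposition~\ref{prop2}) is reduced to Proposition~\ref{prop1} by a contradiction argument using neighborhoods of $\alpha^p(x)$. The doubly-recurrent case (Proposition~\ref{prop3}) is the hard one and relies on a number-theoretic lemma of Tere\v{s}\v{c}\'{a}k (Lemma~\ref{exist-k}) to build, from \emph{separate} return-time sets $\mathcal{M}$ for $x$ and $\mathcal{N}$ for $y$, a finite chain of comparisons that substitutes for the joint recurrence you assumed. That machinery is exactly what your ``standard diagonal argument'' is missing.
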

\begin{theorem}\label{main}
  There is a homeomorphic mapping $$h:\omega^p(x^0)\rightarrow h(\omega^p(x^0)) 
  \subset \mathbb{R}^2.$$
\end{theorem}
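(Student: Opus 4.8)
The plan is to use the constancy statement of Theorem \ref{changzhixing-th} to show that, on the $\omega$-limit set $\omega^p(x^0)$, any two distinct points differ in a controlled way, and then to build an explicit two-dimensional "coordinate" from the geometry of $\sigma(\cdot)$. First I would fix the constant value $c$ given by Theorem \ref{changzhixing-th}: for any $x\neq y$ in $\omega^p(x^0)$ and any $t\in\mathbb{R}$, $\phi(t,x)-\phi(t,y)\in\Lambda$ with $\sigma(\phi(t,x)-\phi(t,y))=c$, an odd number in $\{1,3,\dots,\tilde n\}$. In particular, taking $t=0$, for every pair of distinct points $x,y\in\omega^p(x^0)$ the difference $x-y$ lies in $\Lambda$ and has exactly $c$ sign changes in the cyclic sequence $x_1-y_1,\dots,x_n-y_n,-(x_1-y_1)$. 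Property (iv) of Corollary \ref{nonlinear system large-constant} (equivalently Theorem \ref{linear system large-constant}(iv)) then guarantees that for each index $i$, $(x_i-y_i,x_{i+1}-y_{i+1})\neq(0,0)$; this nondegeneracy is what will make the coordinate map below well defined.

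The key step is to construct the embedding map $h$. I would pick a fixed index, say $i=1$ (or more robustly, pass to a pair of consecutive indices for which the relevant components are "active" on the whole compact set $\omega^p(x^0)$), and define $h(x)=(x_1,x_2)$, i.e. projection onto the first two coordinates; more generally one may need a projection $\pi$ onto a well-chosen coordinate plane, or a normalized angular coordinate built from $(x_i-x_i^*, x_{i+1}-x_{i+1}^*)$ relative to some reference point. The point is that $h$ restricted to $\omega^p(x^0)$ is injective: if $h(x)=h(y)$ with $x\neq y$, then $x_1-y_1=x_2-y_2=0$, i.e. $(x_1-y_1,x_2-y_2)=(0,0)$, contradicting property (iv) with $i=1$. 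Since $\omega^p(x^0)$ is compact and $h$ is continuous and injective, $h$ is a homeomorphism onto its image $h(\omega^p(x^0))\subset\mathbb{R}^2$, which is compact. (If a single fixed pair of indices does not work uniformly on $\omega^p(x^0)$, one covers the set by finitely many such coordinate patches using continuity and the finiteness of $\sigma$-values, and then glues them; alternatively one uses the full "Lyapunov coordinate" associated with $\sigma(\cdot)$ as in the classical cyclic-feedback arguments of \cite{MP1990_sub,MS}.)

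The main obstacle I anticipate is precisely establishing injectivity of a \emph{single} two-dimensional projection on all of $\omega^p(x^0)$: property (iv) only tells us that consecutive pairs of the \emph{difference} vector $x-y$ never both vanish, for the particular index where $\sigma$ records a sign change — it does not immediately hand us one fixed index $i$ that works for every pair simultaneously. Overcoming this requires exploiting the rigidity coming from $\sigma(\phi(t,x)-\phi(t,y))\equiv c$ for \emph{all} $t$, not just $t=0$: flowing the difference vector and using that its number of sign changes never drops forces the "active block" of indices to be consistent across the whole limit set, so that after possibly relabeling one may choose indices $i,i+1$ that detect every pair. Once that uniform choice is secured, the compactness-plus-injectivity argument closes the proof immediately, and the equivariance $h\circ P = \widetilde P\circ h$ (making this an embedding of dynamical systems in the sense of Definition \ref{insert property}) follows by transporting the $P$-action through $h$, which is automatic for a topological embedding of a compact invariant set.
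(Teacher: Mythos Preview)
Your approach is correct and matches the paper's: project onto $(x_1,x_2)$, use Theorem~\ref{changzhixing-th} for injectivity, and invoke compactness for the homeomorphism. However, the ``main obstacle'' you anticipate does not exist, and the elaborate fix you sketch (active blocks, flowing, patching) is unnecessary. You write that property~(iv) ``only tells us that consecutive pairs of the difference vector $x-y$ never both vanish, for the particular index where $\sigma$ records a sign change.'' This misreads the statement: property~(iv) of Theorem~\ref{linear system large-constant} (and Corollary~\ref{nonlinear system large-constant}) asserts that if $z\in\Lambda$ then $(z_i,z_{i+1})\neq(0,0)$ \emph{for every} $i\in\{1,\dots,n\}$, and this is immediate from the definition of $\Lambda$ itself (if $z_i=0$ then $\delta_i\delta_{i+1}z_{i+1}z_{i-1}<0$, so in particular $z_{i+1}\neq 0$). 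Since Theorem~\ref{changzhixing-th} gives $x-y\in\Lambda$ for every distinct pair $x,y\in\omega^p(x^0)$, the projection onto \emph{any} fixed pair of consecutive coordinates is injective on the whole limit set. The paper does exactly this: it takes $h(x)=(x_1,x_2)$ and notes in one line that $h(x^1)=h(x^2)$ with $x^1\neq x^2$ would force $x^1-x^2=(0,0,\ast,\dots)\notin\Lambda$, contradicting Theorem~\ref{changzhixing-th}. Your compactness-plus-continuous-injection argument for the homeomorphism and your remark on equivariance of $P$ are both fine and, if anything, slightly cleaner than the paper's explicit verification of the dynamical-system axioms on $h(\omega^p(x^0))$.
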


\subsection{Proof of Theorem \ref{changzhixing-th}}

Similar to the deduction for \cite[Theorem 
A]{terescak1994_sub}, the proof of Theorem \ref{changzhixing-th} is divided into the following Propositions \ref{prop1}, \ref{prop2} and \ref{prop3}. Note that $\sigma$ is defined only on $\Lambda$, the result in \cite[Theorem A]{terescak1994_sub} can not be applied to Theorem 
\ref{changzhixing-th} directly.

\begin{proposition}\label{prop1}
Assume $\widetilde{x}\in \omega^p(x^0)$, then  
  \begin{equation*}
  \phi(t,x)-\phi(t,y)\in\Lambda, \mbox{and} \  \sigma(\phi(t,x)-\phi(t,y))\equiv constant,
  \end{equation*} 
  for all $ x,y\in\overline{O(\widetilde{x})}( x\neq y) \mbox{and} \ t\in \mathbb{R}$.
\end{proposition}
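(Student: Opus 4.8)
The plan is to reduce the statement, by approximation, to the case where both points lie on the orbit $O(\widetilde x)$ itself, to settle that case by a recurrence argument (a single drop of $\sigma$ along a difference of two orbits of $\widetilde x$ would be copied infinitely often along the genuine forward trajectory $\phi(\cdot,x^0)$), and finally to pass to the closure using continuity of $\sigma$ on $\Lambda$. Concretely: since $\widetilde x\in\omega^p(x^0)$ and $\omega^p(x^0)$ is compact and $P$-invariant, the closure $\overline{O(\widetilde x)}$ lies in a compact invariant set, so every $\phi(\cdot,z)$ with $z\in\overline{O(\widetilde x)}$ is bounded on all of $\mathbb R$ and Corollary \ref{nonlinear system large-constant} applies to differences of such solutions. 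If $x=P^i\widetilde x$ and $y=P^j\widetilde x$ with $m:=j-i\ge 1$, then by $T$-periodicity of $f$ one has $\phi(t,x)-\phi(t,y)=v_m(t+iT)$, where $v_m(s):=\phi(s,\widetilde x)-\phi(s+mT,\widetilde x)\not\equiv 0$; so it is enough to show that $v_m(s)\in\Lambda$ for all $s$ (then $\sigma(v_m(\cdot))$ is locally constant on the connected set $\mathbb R$ by Corollary \ref{nonlinear system large-constant}(ii), hence constant).

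For the orbit case, which I expect to be the crux, I would argue by contradiction. Suppose $v_m(t^*)\notin\Lambda$ for some $t^*$. By Corollary \ref{nonlinear system large-constant}(i),(iii),(v) there are then values $\beta>\alpha$ and $L>0$ with $v_m(t)\in\Lambda$ and $\sigma(v_m(t))=\beta$ for $t\le -L$, and $\sigma(v_m(t))=\alpha$ for $t\ge L$. Using $\widetilde x\in\omega^p(x^0)$, choose $n_k\uparrow\infty$ with $P^{n_k}x^0\to\widetilde x$ and, after thinning, with $n_{k+1}-n_k$ so large that the windows below are pairwise disjoint. Continuous dependence on initial data gives $\phi(\cdot+n_kT,x^0)-\phi(\cdot+(n_k+m)T,x^0)\to v_m$ uniformly on compact time intervals; since $\sigma$ is continuous on $\Lambda$ and $\sigma(v_m(\cdot))\equiv\beta$ (resp.\ $\equiv\alpha$) near $-L$ (resp.\ near $L$), I can then select, for all large $k$, times $t_k^-<t_k^+$ close to $-L+n_kT$ and to $L+n_kT$ with $g(t_k^\pm)\in\Lambda$ and $\sigma(g(t_k^-))=\beta>\alpha=\sigma(g(t_k^+))$, where $g(s):=\phi(s,x^0)-\phi(s+mT,x^0)$. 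But $g$ is a difference of two solutions of \eqref{system 1.1}+\eqref{assume simple system 1.1} that are bounded on $[0,\infty)$ (because $O^+(x^0)$ is bounded), so by Corollary \ref{nonlinear system large-constant}(iii) the function $\sigma(g(\cdot))$ strictly decreases across each of its exceptional times and, being finite-valued, has only finitely many of them on $[0,\infty)$. Since the pairwise disjoint windows $[t_k^-,t_k^+]$ each force a drop of $\sigma(g)$ by $\beta-\alpha>0$, this produces infinitely many exceptional times — a contradiction. Hence $v_m(s)\in\Lambda$ for all $s$, as needed.

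To pass to the closure, let $x,y\in\overline{O(\widetilde x)}$ with $x\neq y$. If $\widetilde x$ is $P$-periodic then $\overline{O(\widetilde x)}=O(\widetilde x)$ and the orbit case already applies; otherwise write $x=\lim_kP^{a_k}\widetilde x$, $y=\lim_kP^{b_k}\widetilde x$ with $a_k\neq b_k$ for all large $k$, so by the orbit case there are constants $c_k$ with $\sigma(\phi(t,P^{a_k}\widetilde x)-\phi(t,P^{b_k}\widetilde x))\equiv c_k$ in $t$ and the difference in $\Lambda$ for all $t$. Put $z(t):=\phi(t,x)-\phi(t,y)\not\equiv 0$, a difference of two bounded solutions; by continuous dependence $\phi(t,P^{a_k}\widetilde x)-\phi(t,P^{b_k}\widetilde x)\to z(t)$ for every $t$, and by Corollary \ref{nonlinear system large-constant}(i) we have $z(t)\in\Lambda$ off a finite set. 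For any such $t$, continuity of $\sigma$ on $\Lambda$ forces $c_k=\sigma(z(t))$ for all large $k$; hence the $c_k$ eventually equal a fixed $c$, and $\sigma(z(t))=c$ whenever $z(t)\in\Lambda$. Finally, if $z(t_0)\notin\Lambda$ for some $t_0$, Corollary \ref{nonlinear system large-constant}(iii) would give $\sigma(z(t_0^-))>\sigma(z(t_0^+))$ while both equal $c$ — impossible. Thus $z(t)\in\Lambda$ and $\sigma(z(t))=c$ for all $t$, which is the claim.

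The hard part is the recurrence argument of the second paragraph: turning one hypothetical drop of $\sigma$ along the fully bounded, ``bi\-recurrent'' difference $v_m$ into infinitely many disjoint drops along the only forward bounded $g$. This forces one to combine (a) the shadowing of $\phi(\cdot,\widetilde x)$ by the time\-shifts $\phi(\cdot+n_kT,x^0)$ coming from $P^{n_k}x^0\to\widetilde x$, (b) the continuity of $\sigma$ on $\Lambda$, used to read off the two window endpoints in the limit, and (c) the finitely\-many\-drops property of $\sigma$ along differences of bounded solutions of \eqref{linear system}+\eqref{linear sys assumption}. The bookkeeping — fixing $L$, thinning $\{n_k\}$ so the windows $[t_k^-,t_k^+]$ are pairwise disjoint, and choosing the comparison times $t_k^\pm$ inside $\Lambda$ so that $\sigma$ is actually defined there — is the fiddly part, and it is also the only place where the omega\-limit structure of $\widetilde x$ (as opposed to mere boundedness of its orbit) is really used.
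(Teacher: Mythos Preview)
Your argument is correct and follows essentially the same route as the paper: for $x,y\in O(\widetilde x)$ you transfer a hypothetical drop of $\sigma$ along $v_m(s)=\phi(s,\widetilde x)-\phi(s+mT,\widetilde x)$ to the forward difference $g(s)=\phi(s,x^0)-\phi(s+mT,x^0)$ via the recurrence $P^{n_k}x^0\to\widetilde x$, and then invoke the finitely-many-drops property of Corollary~\ref{nonlinear system large-constant}; the passage to $\overline{O(\widetilde x)}$ by approximation and continuity of $\sigma$ on $\Lambda$ is likewise the paper's method. The paper phrases the orbit step directly (it reads off $\sigma=C_1$ and $\sigma=C_2$ at two late times along $g$ and concludes $C_1=C_2$ from eventual constancy) rather than via ``infinitely many windows'', but the content is identical.
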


\begin{proposition}\label{prop2}
  Assume $x^0\notin \omega^{p}(x^0),\ x,y\in\omega^{p}(x^0)(x\neq y).$ If $x\notin\alpha^ {p}(x)$ or $y\notin\alpha^{p}(y)$, then
  \begin{equation*}\label{e3}
    \sigma(\phi(t,x)-\phi(t,y))\equiv constant,
  \end{equation*}
  for all $t\in \mathbb{R}$.
\end{proposition}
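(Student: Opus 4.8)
The plan is to prove Proposition \ref{prop2} by exploiting the constancy of $\sigma$ along each orbit closure (Proposition \ref{prop1}) together with the strict drop property of $\sigma$ across the exceptional times (Corollary \ref{nonlinear system large-constant}(iii) and (v)). First I would fix $x,y\in\omega^p(x^0)$ with $x\neq y$, and without loss of generality suppose $x\notin\alpha^p(x)$. By Corollary \ref{nonlinear system large-constant}(v) applied to the pair of bounded solutions $\phi(t,x)$ and $\phi(t,y)$, there exist $t_0>0$ and integers $C_1\le C_2$ with $\sigma(\phi(t,x)-\phi(t,y))=C_1$ for all $t\ge t_0$ and $=C_2$ for all $t\le -t_0$; the goal is exactly to show $C_1=C_2$. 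Since $\sigma$ drops only at finitely many instants and always strictly (parts (i) and (iii)), it suffices to rule out the existence of any $t^*$ with $\phi(t^*,x)-\phi(t^*,y)\notin\Lambda$.

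The key step is a recurrence/compactness argument. Because $x,y\in\omega^p(x^0)$, there is a sequence $n_k\to+\infty$ with $P^{n_k}(x^0)\to$ some point; more usefully, I would use that $\omega^p(x^0)$ is compact and invariant, and pick sequences $m_k\to\infty$ along which $P^{m_k}x\to x$ (if $x$ were $\alpha$-recurrent this would be automatic in negative time; here I instead use positive-time recurrence of $x$ inside the compact invariant set $\omega^p(x^0)$, which holds for some subsequence, or pass to the closure $\overline{O(x)}$ and apply Proposition \ref{prop1} there). The point $P^{m_k}x$ and $P^{m_k}y$ are related to $\phi$ by the time-$T$ map, so $\phi(t,P^{m_k}x)-\phi(t,P^{m_k}y)=\phi(t+m_kT,x)-\phi(t+m_kT,y)$. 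Evaluating $\sigma$ at a fixed $t$ for large $k$ and using the eventual-constancy ($\ge t_0$) gives $\sigma\big(\phi(t,P^{m_k}x)-\phi(t,P^{m_k}y)\big)=C_1$ for all large $k$. Taking a limit point $(x',y')$ of $(P^{m_k}x,P^{m_k}y)$ with $x'\neq y'$ (one must check $x'\neq y'$, which follows because $\sigma(\phi(t,x')-\phi(t,y'))=C_1$ is defined, so in particular $\phi(t,x')\neq\phi(t,y')$), and using continuity of $\phi$ and lower/upper semicontinuity properties of $\sigma_m,\sigma_M$, I would conclude that the pair $(x',y')$ has $\sigma\equiv C_1$ for \emph{all} $t\in\mathbb R$, i.e. no drop ever occurs for this limiting pair. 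The asymmetry between $x\notin\alpha^p(x)$ and the generic situation is what lets me push the limit point back to relate $C_2$ to $C_1$: running the same argument in negative time, or observing that the limiting pair captures the ``tail'' behaviour at $-\infty$ as well, forces $C_2=C_1$.

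Concretely, the cleanest route: let $z(t)=\phi(t,x)-\phi(t,y)$ and suppose for contradiction $C_2>C_1$, so $z$ has at least one strict drop, say at $t=\tau$ with $\sigma(z(\tau^-))>\sigma(z(\tau^+))$. Since $x\notin\alpha^p(x)$, the backward orbit $\{P^{-n}x\}$ does not return to $x$; but $x\in\omega^p(x^0)$ means $x$ is approximated by $P^{n_k}x^0$. I would use a shift-and-limit: for the pair $(P^{-n}x, P^{-n}y)$ (which lie in $\omega^p(x^0)$ by invariance), $\sigma\big(\phi(t,P^{-n}x)-\phi(t,P^{-n}y)\big)=\sigma(z(t-nT))$, which equals $C_2$ for all $t$ with $t-nT\le -t_0$, i.e. for $t\le nT-t_0$ — a window growing with $n$. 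Passing to a convergent subsequence $P^{-n_j}x\to x^*$, $P^{-n_j}y\to y^*$ (again $x^*\neq y^*$ since $\sigma$ of their difference is defined), continuity gives a pair whose $\sigma$ is identically $C_2$ on all of $\mathbb R$. But then, applying Corollary \ref{nonlinear system large-constant}(v) to $(x^*,y^*)$ and also using that $x^*$ and $y^*$ again lie in $\omega^p(x^0)$, together with the fact (from Proposition \ref{prop1}, applied to $\overline{O(x^*)}$) that $\sigma$ is constant on that orbit closure — I run the forward version and get $C_1$ as the forward value for a pair related to $(x,y)$, producing $C_1=C_2$, the desired contradiction.

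The main obstacle I anticipate is the bookkeeping of which points are $\alpha$- or $\omega$-recurrent and ensuring the limit points of the shifted pairs remain \emph{distinct} — this is exactly where the hypothesis ``$x\notin\alpha^p(x)$ or $y\notin\alpha^p(y)$'' is used and must not be wasted, since without it one cannot conclude $C_1=C_2$ in general. A secondary technical point is that $\sigma$ is only defined on $\Lambda$ (as the authors emphasize, \cite[Theorem A]{terescak1994_sub} does not apply verbatim), so every limiting argument must be routed through $\sigma_m,\sigma_M$ and the semicontinuity/openness facts (Lemma \ref{theta openset}, Lemma \ref{locally coincide}) rather than through continuity of $\sigma$ itself; I would state a small lemma that if $\sigma(\phi(t,x_k)-\phi(t,y_k))=C$ for all $k$ and all $t$ in an interval, and $(x_k,y_k)\to(x_\infty,y_\infty)$ with $x_\infty\neq y_\infty$, then $\sigma(\phi(t,x_\infty)-\phi(t,y_\infty))=C$ on that interval, and use it repeatedly.
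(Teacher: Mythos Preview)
Your argument has a genuine gap at the closing step. After shifting backward and extracting a subsequential limit $(x^*,y^*)$ with $x^*\in\alpha^p(x)$, $y^*\in\alpha^p(y)$, you propose to invoke Proposition~\ref{prop1} on $\overline{O(x^*)}$ to force $C_1=C_2$. But Proposition~\ref{prop1} only gives constancy of $\sigma$ for pairs drawn from a \emph{single} orbit closure, and there is no reason whatsoever that $y^*\in\overline{O(x^*)}$: the point $y^*$ lies in $\alpha^p(y)$, which is a priori unrelated to the orbit of $x^*$. So that invocation is illegitimate, and you are left with two limiting pairs, one with value $C_2$ and (by the ``forward version'') one with value $C_1$, with no mechanism connecting them. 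Relatedly, the hypothesis $x\notin\alpha^p(x)$ is never used in a substantive way; you note it implies $x^*\neq x$, but that fact is not exploited. Your auxiliary lemma is also not safe as stated: the limit $\phi(t,x_\infty)-\phi(t,y_\infty)$ may fail to lie in $\Lambda$ at some $t$, so $\sigma$ could drop there; semicontinuity of $\sigma_m,\sigma_M$ only gives $\sigma_m\le C\le\sigma_M$ at such points.

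The paper's proof uses $x\notin\alpha^p(x)$ in an essential and different way. Since $\alpha^p(x)\subset\overline{O(x)}$, Proposition~\ref{prop1} guarantees $u-x\in\Lambda$ for every $u\in\alpha^p(x)$; compactness then yields open sets $U\ni x$, $V\ni y$, and a finite cover $U_1,\dots,U_k$ of $\alpha^p(x)$ with $\bar u-\bar v\in\Lambda$ on each $U_i\times U$ and on $U\times V$. Because $x\in\omega^p(x^0)$ and $x^0\notin\omega^p(x^0)$, one picks an actual orbit point $x^{*}\in U\cap O^+(x^0)$, then $n_1$ large so that $\phi(n_1T,x^{*})\in V$ and $\phi(-n_1T,x)\in\bigcup U_i$. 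Using constancy of $\sigma$ on these product neighborhoods together with the assumed drop, a short chain of inequalities produces
\[
\sigma(\phi(-n_1T,x)-x)>\sigma(x-\phi(n_1T,x)),
\]
which contradicts Proposition~\ref{prop1} applied with $\tilde x=x$, since both $\phi(-n_1T,x)$ and $\phi(n_1T,x)$ lie in $\overline{O(x)}$. The essential idea you are missing is to route the contradiction through a \emph{single} orbit closure, using the separation of $x$ from $\alpha^p(x)$ and a nearby point on the original orbit $O^+(x^0)$ as a bridge.
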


\begin{proposition}\label{prop3}
  Assume $x^0\notin\omega^{p}(x^0),\ x,y\in\omega^{p}(x^0)(x\neq y).$ If $x\in\alpha^ {p}(x)$ and $y\in\alpha^{p}(y)$, then
\begin{equation*}
\sigma(\phi(t,x)-\phi(t,y))\equiv constant,
\end{equation*}
for all $t\in \mathbb{R}$.
  \end{proposition}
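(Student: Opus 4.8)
The plan is to argue by contradiction. Suppose $\sigma(\phi(t,x)-\phi(t,y))$ is not constant. Writing $z(t)=\phi(t,x)-\phi(t,y)$, Corollary~\ref{nonlinear system large-constant}(i)(iii)(v) says that $z(t)\in\Lambda$ for all $t$ outside a finite set, that $t\mapsto\sigma(z(t))$ is a non-increasing step function, and that $\sigma(z(t))=C_{1}$ for $t\ge t_{0}$, $\sigma(z(t))=C_{2}$ for $t\le -t_{0}$ with $C_{1}\le C_{2}$; non-constancy means $C_{1}<C_{2}$. The goal is to use the recurrence hypotheses $x\in\alpha^{p}(x)$ and $y\in\alpha^{p}(y)$ to force $\sigma(z(t))\equiv C_{2}$, which contradicts $C_{1}<C_{2}$.

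First I would record the consequences of recurrence. Since $\alpha^{p}(x)$ and $\alpha^{p}(y)$ are closed and $P$-invariant and contain $x$ and $y$, we have $\overline{O(x)}=\alpha^{p}(x)$, $\overline{O(y)}=\alpha^{p}(y)$, both compact; and for every $\varepsilon>0$ the backward return-time sets $\mathcal M_{\varepsilon}=\{m\ge1:\|P^{-m}x-x\|<\varepsilon\}$ and $\mathcal N_{\varepsilon}=\{m\ge1:\|P^{-m}y-y\|<\varepsilon\}$ are infinite, and remain infinite after intersecting with $[R,\infty)$ for any $R$. The central step is to synchronize these two families: I would apply Lemma~\ref{exist-k} to (tails of) $\mathcal M_{\varepsilon}$ and $\mathcal N_{\varepsilon}$, which produces one integer $L$ that is simultaneously a prescribed positive combination of elements of $\mathcal M_{\varepsilon}$ and of elements of $\mathcal N_{\varepsilon}$; chaining the corresponding approximate return maps and controlling the accumulated error by uniform continuity of $P^{\pm1},\dots,P^{\pm s}$ on the compact set $\overline{O^{+}(x^{0})}$ (with $\varepsilon$ chosen small relative to the finitely many iterates that occur), one obtains a sequence $L_{k}\uparrow\infty$ with
\[
P^{-L_{k}}x\longrightarrow x\quad\text{and}\quad P^{-L_{k}}y\longrightarrow y .
\]

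Then I would invoke the shift identity for the $T$-periodic flow, $\phi(t,P^{-L}w)=\phi(t-LT,w)$, which gives $\phi(t,P^{-L_{k}}x)-\phi(t,P^{-L_{k}}y)=z(t-L_{k}T)$. Fix $t\in\mathbb R$. For $k$ large, $t-L_{k}T\le -t_{0}$, so $z(t-L_{k}T)\in\Lambda$ and $\sigma(z(t-L_{k}T))=C_{2}$. On the other hand, by continuous dependence on initial conditions, $z(t-L_{k}T)=\phi(t,P^{-L_{k}}x)-\phi(t,P^{-L_{k}}y)\to\phi(t,x)-\phi(t,y)=z(t)$, and $z(t)\ne0$ (else $x=y$ by uniqueness of solutions); since $z(t)\in\Lambda$ for $t$ outside a finite set and $\sigma$ is continuous on $\Lambda$, we conclude $\sigma(z(t))=C_{2}$ for all such $t$. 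A non-increasing step function equal to $C_{2}$ on a cofinite set is identically $C_{2}$, contradicting $C_{1}<C_{2}$. Hence $\sigma(\phi(t,x)-\phi(t,y))$ is constant.

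The hard part is the synchronization: turning the purely combinatorial output of Lemma~\ref{exist-k} into a genuine common backward-return sequence $L_{k}$ for the pair $(x,y)$, i.e.\ controlling the error accumulated when an approximate return map is iterated several times — this is precisely why the lemma is stated in terms of two representations of a single integer adapted separately to $\mathcal M_{\varepsilon}$ and $\mathcal N_{\varepsilon}$. The remaining ingredients (compactness of the orbit closures, the step-function structure and the threshold $t_{0}$ from Corollary~\ref{nonlinear system large-constant}, continuity of $\sigma$ on $\Lambda$, and the shift identity) are routine; in particular the limit vectors $z(t)$ are automatically nonzero and, off a finite set, lie in $\Lambda$, so the passage to the limit for $\sigma$ is legitimate.
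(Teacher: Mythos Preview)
Your overall plan---reduce to the claim that $(x,y)$ is jointly negatively recurrent, i.e.\ that there is a sequence $L_k\to\infty$ with $P^{-L_k}x\to x$ and $P^{-L_k}y\to y$, and then pass $\sigma(z(t-L_kT))=C_2$ to the limit---would indeed give a very short proof. The difficulty is that the synchronization step, which you yourself flag as ``the hard part'', does not follow from Lemma~\ref{exist-k} the way you describe. You correctly read the lemma as producing a single integer $L=k_1m_1+k_2m_2+m_3=l_1n_1+l_2n_2+n_3$ expressible as a sum (with repetitions) of elements of $\mathcal M_\varepsilon$ and also of elements of $\mathcal N_\varepsilon$. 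But ``$P^{-m_1}x$ is $\varepsilon$-close to $x$'' says nothing about $P^{-m_1}$ as a map; composing it $k_1$ times multiplies the error by (roughly) the $k_1$-th power of the local Lipschitz constant of $P^{-m_1}$, and the integers $k_1,k_2,l_1,l_2$ are \emph{outputs} of the lemma with no a~priori bound. Your sentence ``$\varepsilon$ chosen small relative to the finitely many iterates that occur'' is circular: those iterates (both the building blocks $m_i,n_j$ and the repetition counts $k_i,l_j$) are produced from $\mathcal M_\varepsilon,\mathcal N_\varepsilon$ and hence depend on $\varepsilon$. In short, individual negative recurrence of $x$ and of $y$ does not yield joint negative recurrence of the pair $(x,y)$ under $P\times P$, and Lemma~\ref{exist-k} by itself does not bridge that gap.

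This is precisely why the paper does \emph{not} attempt to build a common return time. Instead it (i) uses transitivity (Lemma~\ref{inverse-transitive}) to pass from $\alpha^p$-recurrence to $\omega^p$-recurrence, (ii) proves Lemma~\ref{large-x,y}, which gives neighbourhoods $U\ni x$, $V\ni y$ and an integer $N$ such that a single application of \eqref{large-x} or \eqref{large-y} drops $\sigma$ using a return of \emph{one} coordinate only, and then (iii) uses Lemma~\ref{exist-k} to manufacture a finite chain $(x^1,y^1),\dots,(x^k,y^k)$ along which these one-sided $\sigma$-inequalities are applied step by step, ending with $(x^k,y^k)\in U\times V$ so that $\sigma(x^k-y^k)=\sigma(x-y)$. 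The point is that at every link of the chain only individual recurrence is used, so no error ever has to be propagated through an unbounded number of compositions; the Lyapunov function $\sigma$ absorbs the ``bookkeeping'' that your approach tries to do metrically. If you want to keep your line of argument, you would need an independent proof that $(x,y)\in\alpha^{P\times P}(x,y)$, which is not available from the hypotheses alone.
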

  
\begin{proof}[Proof of Theorem \ref{changzhixing-th}]
  Choose $x^0\in \mathbb{R}^n$  such that $\overline{O^{+}(x^0)} $ is bounded. If $x^0\in\omega^{p}(x^0)$, then 
  the theorem is proved by Proposition \ref{prop1}; if $x^0\notin\omega^{p}(x^0)$, then the theorem is proved by Propositions \ref{prop2} and \ref{prop3}.
\end{proof}

In the following, we will give detailed proofs for Propositions \ref{prop1}, \ref{prop2}, \ref{prop3}.

\begin{proof}[Proof of Proposition \ref{prop1}]
For any $x,y\in \omega^{p}(x^0)(x\neq y)$, by virtue of Corollary \ref{nonlinear system large-constant}(v),  
there exist $C_1,C_2\in\mathbb{N}$, $t_0>0$ such that
  \begin{equation*}
  \begin{aligned}
    \sigma(\phi(t,x)-\phi(t,y))&=C_1,\quad t\geq t_0,\\
    \sigma(\phi(t,x)-\phi(t,y))&=C_2,\quad t\leq - t_0.
  \end{aligned}
  \end{equation*}
Particularly, take $t=n_0T>t_0$, then
\begin{equation*}
    \phi(n_0T,x)-\phi(n_0T,y),\quad \phi(-n_0T,x)-\phi(-n_0T,y)\in \Lambda.
\end{equation*}

The rest of the proof can be divided into the following two cases.

(1) $ x,y\in O(\widetilde{x}),x\neq y$. At this point, there exist $m,n\in \mathbb{Z}$\ such that $ 
\phi(mT,\widetilde{x})=x,\phi(nT,\widetilde{x})=y$, thus,
\begin{equation*}
  \begin{aligned}
    \phi(t,x)-\phi(t,y)=& \phi(t,\phi(mT,\widetilde{x}))-\phi(t,\phi(nT,\widetilde{x}))\\
=&\phi(t+mT,\widetilde{x})-\phi(t+nT,\widetilde{x}).
  \end{aligned}
\end{equation*}
Let $I(t)=\phi(t+mT,\widetilde{x})-\phi(t+nT,\widetilde{x})$. Noticing that $\widetilde{x}\in \omega^p(x^0)$, there exists $\{l_k\}\subset\mathbb{N}$, such that $ 
\phi(l_kT,x^0) \rightarrow \widetilde{x}$, that is,
\begin{equation*}
  \begin{aligned}
    \|I(n_0T)-[\phi((n_0+l_k)T,\phi(mT,x^0))-\phi((n_0+l_k)T,\phi(nT,x^0)]\|\ll 1, \quad k\gg 1
  \end{aligned}
\end{equation*}
where $\|\cdot\|$ represents the 2-norm on $\mathbb{R}^2$. Recall that $\Lambda$ is an open set, then
    \begin{equation}\label{positive-cons}
      \begin{aligned}
       & \phi((n_0+l_k)T,\phi(mT,x^{0}))-\phi((n_0+l_k)T,\phi(nT,x^{0}))\in \Lambda,\quad  k\gg 1,\\
      &\sigma(\phi((n_0+l_k)T,\phi(mT,x^0))-\phi((n_0+l_k)T,\phi(nT,x^0))=C_1,\quad k\gg 1.
     \end{aligned}
    \end{equation}
Similarly, 
   \begin{equation}\label{negative-cons}
      \begin{aligned}
    & \phi((-n_0+l_k)T,\phi(mT,x^{0}))-\phi((-n_0+l_k)T,\phi(nT,x^{0})\in \Lambda, k\gg 1,\\
    &\sigma(\phi((-n_0+l_k)T,\phi(mT,x^0))-\phi((-n_0+l_k)T,\phi(nT,x^0))=C_2, k\gg 1.
    \end{aligned}
   \end{equation}
  By Corollary \ref{nonlinear system large-constant}(v), there exists $C\in\mathbb{N}$ such that
    \begin{equation*}
      \begin{aligned}
       & \phi(t+l_{k}T,\phi(mT,x^{0}))-\phi(t+l_{k}T,\phi(nT,x^{0}))\in\Lambda,\quad  \\ 
       & \sigma(\phi(t+l_{k}T,\phi(mT,x^{0}))-\phi(t+l_{k}T,\phi(nT,x^{0})))=C,
       \end{aligned}
    \end{equation*}
for $k\gg 1$. Combining \eqref{positive-cons} and \eqref{negative-cons}, $C_1=C_2=C$, and by Corollary \ref{nonlinear system large-constant}(iii)
\begin{equation*}
  \phi(t,x)-\phi(t,y)\in\Lambda, \ \text{for all } t \in \mathbb{R}.
\end{equation*}

(2) $x,y\in \overline{O(\widetilde{x})}$. Note that there exist $\{m_s\},\{n_s\}\subset \mathbb{Z}$ such that $\phi(m_ sT,\widetilde{x})\rightarrow x,\ 
\phi(n_sT,\widetilde{x})
  \rightarrow y$,then exists $s_0>0$ such that
  \begin{equation*}
    \begin{aligned}
        \sigma(\phi(n_0T,\phi(m_{s_0}T,\widetilde{x}))-
        \phi(n_0T,\phi(n_{s_0}T,\widetilde{x})))=C_1,\\
        \sigma(\phi(-n_0T,\phi(m_{s_0}T,\widetilde{x}))-
        \phi(-n_0T,\phi(n_{s_0}T,\widetilde{x})))=C_2.
    \end{aligned}
  \end{equation*}
Let $x^{*}=\phi(m_{s_0}T,\widetilde{x}),\ y^{*}=
  \phi(n_{s_0}T,\widetilde{x}).$ According to (1), we know that $\sigma(\phi(t,x^{*})
  -\phi(t,y^{*}))=C $,\ therefore,
  \begin{equation*}
    C_1=C_2=C.
  \end{equation*}
Again by Corollary \ref{nonlinear system large-constant}(iii)
\begin{equation*}
  \phi(t,x)-\phi(t,y)\in\Lambda, \ \text{for all } t \in \mathbb{R}.
\end{equation*}

The proof of Proposition \ref{prop1} is completed.
\end{proof} 
\vskip 2mm

\begin{proof}[Proof of Proposition \ref{prop2}]
Assume that there exist $x^{'},y^{'} \in \omega^{p}(x^0)(x^{'}\neq y^{'})$ but $x^ {'}\notin\alpha^{p}(x^{'})$
  (the proof of $y'\notin\alpha^{p}(y')$ is similar), and exists $t_1\in\mathbb{R}$ such that $\phi(t_1,x)-\phi(t_1,y)\notin \Lambda$. Without loss of 
  generality, take $t_1=0$. Based on Corollary \ref{nonlinear system large-constant}(v), let $n_0\in \mathbb{N} $ be such that $n_0T>t_0$, then
\begin{equation*}
    \phi(n_0T,x^{'})-\phi(n_0T,y^{'}),\quad \phi(-n_0T,x^{'})-\phi(-n_0T,y^{'})\in \Lambda,
\end{equation*}
 and 
  $$
  \sigma(\phi(-n_0T,x^{'})-\phi(-n_0T,y^{'}))> \sigma(\phi(n_0T,x^{'})-\phi(n_0T,y^{'})).
  $$
Let $x=\phi(-n_0T,x^{'}),y=\phi(-n_0T,y^{'})$, we have
  \begin{equation}\label{proof of prop2 no.1}
  \begin{split}
     x-y,\ \phi(2n_0T,x)-\phi(2n_0T,y)\in\Lambda, \\
    \sigma(x-y)>\sigma(\phi(2n_0T,x)-\phi(2n_0T,y)).
  \end{split}
  \end{equation}
Moreover, $x\in O(x^{'})$, and thus, $x\notin \alpha^{p}(x)$.
\par 
Due to $x-y\in \Lambda$, and $\alpha^{p}(x)\subset\overline{O(x)}, \ \alpha^{p}(x)\times\{x\}\subset \overline{O(x)}\times\overline{O(x)}$ are compact set, there are open sets
  $U,V,U_1,\cdots,U_k$ with $x\in U,\ y\in V,\ \alpha^p(x)\subset \bigcup_{i=1}^{k}U_{i}$, such that 
\begin{equation}\label{neiborhood-constant}
  \overline{x}-\overline{y}\in \Lambda\quad \text{for all } (\overline{x},\overline{y})\in U_{i}\times U\, \text{or } U\times V.
\end{equation}
Note that $\phi(2n_0T,x)-\phi(2n_0T,y)\in \Lambda$, choose $U,V$ smaller if necessary, then
  \begin{equation}\label{proof of prop2 no.2}
    \begin{aligned}
   \sigma(\phi(2n_0T,x)-\phi(2n_0T,y))=\sigma(\phi(2n_0T,U)-\phi(2n_0T,V)),\
    \end{aligned}
  \end{equation}
where $\phi(2n_0T,U)-\phi(2n_0T,V)$ denotes the subtraction of elements in sets $\phi(2n_0T,U)$ and $\phi(2n_0T,V)$.

\par
Recall that $x\in\omega^p(x^0)$, then $U\cap O^{+}(x^0)\neq\varnothing$.
Take $x^*\in U \cap O^{+}(x^0)$, it then follows from $\alpha^p(x)\subset\bigcup_{i=1}^{k}U_{i},
  \ y\in\omega^p(x^0)=\omega^p(x^*)$, that there exists $ n_1> n_0$ such that 
  $$ 
  \phi(n_1T,x^*)\in V , \phi(-n_1T,x)\in U_{i} ,\ 1\leq i\leq k.
  $$
Therefore,
  \begin{equation*}
        \sigma(\phi(-n_1T,x)-x)=\sigma(\phi(-n_1T,x)-x^*),\ 
        \sigma(x-\phi(n_1T,x^*))=\sigma(x-y).
  \end{equation*}
  Combining Corollary \ref{nonlinear system large-constant}(ii),(iii), 
  \begin{equation}\label{proor of prop2 no.3}
    \sigma(\phi(-n_1T,x)-x)\geq\sigma(x-y).
  \end{equation}
Note also that $(x^*,\phi(n_1T,x^*))\in U\times V$,
  \begin{equation*}  
      \sigma(\phi(2n_0T,x^*)-\phi((2n_0+n_1)T,x^*))=\sigma(\phi(2n_0T,U)-\phi(2n_0T,V)).
  \end{equation*}
Combining \eqref{neiborhood-constant}-\eqref{proof of prop2 no.2},
  \begin{equation*}
       \sigma(\phi(2n_0T,x)-\phi(2n_0T,y)) =\sigma(\phi(2n_0T,x^*)-\phi((2n_0+n_1)T,x^*)).
  \end{equation*}
then according to \eqref{proof of prop2 no.1} we have
\begin{equation}\label{proof of prop2 no.4}  
      \sigma(x-y)>\sigma(\phi(2n_0T,x^*)-\phi((2n_0+n_1)T,x^*)).
  \end{equation}
Note that $x\in \omega^p(x^*)$, there exists a sequence $\{l_k\} \subset\mathbb{N}$, such that  $\phi(l_kT,x^{*}) \rightarrow x$
and
  \begin{equation*}
        \phi(l_kT,x^*)-\phi((n_1+l_k)T,x^*)\in \Lambda.
  \end{equation*}
That is, there also exists $L>2n_0+n_1$, such that
  \begin{equation*}
        \sigma(\phi(LT,x^*)-\phi((n_1+L)T,x^*))=\sigma(x-\phi(n_1T,x)).
  \end{equation*}
It then follows by Corollary \ref{nonlinear system large-constant}(ii),(iii) and \eqref{proof of prop2 no.4} that
\begin{equation*}
        \sigma(x-y)>\sigma(x-\phi(n_{1}T,x)).
  \end{equation*}
Furthermore, by \eqref{proor of prop2 no.3},
  \begin{equation*}
    \sigma(\phi(-n_1T,x)-x)>\sigma(x-\phi(n_1T,x)),
  \end{equation*}
which contradicts to the case $\widetilde{x}=x$ in Proposition \ref{prop1}. The proof is completed.

\end{proof}

To show Proposition \ref{prop3}, we give the following lemma for preparation.

\begin{lemma}\label{large-x,y}
Let $x,y\in \omega^p(x^0),x\in \omega^p(x),y\in \omega^p(y), x\notin \omega^p(y),y\notin \omega^p(x)$ and $x-y\in \Lambda$, then there exist open sets $U,V$ of 
$x,y$ and a natural number $N$, such that
 \begin{align}
 \sigma(\phi (0^+,x^*)-\phi (0^+,y^*))&\geq\sigma(\phi((mT)^+,x)-\phi((mT)^+,y^*)),\label{large-x}\\
 \sigma(\phi (0^+,x^{'})-\phi (0^+,y^{'}))&\geq\sigma(\phi((mT)^+,x^{'})-\phi((mT)^+,y))\label{large-y},\forall m\geq N,
 \end{align}
for all  $(x^*,y^*)\in U \times\omega^{p}(y),(x^{'},y^{'})\in\omega^{p}(x)\times V$.
Meanwhile, one has 
$$
\overline{x}-\overline{y}\in \Lambda,\quad \sigma(\overline{x}-\overline{y})\equiv constant,
$$
for all $(\overline{x},\overline{y})\in U\times V$.
\end{lemma}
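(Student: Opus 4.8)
\textit{Proof sketch.}
I would dispose of the last (``Meanwhile'') assertion first, as it is the soft part. Since $x-y\in\Lambda$, $\Lambda$ is open and $\sigma$ is locally constant on $\Lambda$ (Corollary~\ref{nonlinear system large-constant}(ii)), there are open sets $U_0\ni x$, $V_0\ni y$ with $U_0-V_0\subset\Lambda$ and $\sigma(\bar x-\bar y)\equiv\sigma(x-y)$ on $U_0\times V_0$; any later shrinking $U\subset U_0$, $V\subset V_0$ preserves this. Moreover, since $x\notin\omega^p(y)$, $y\notin\omega^p(x)$ and $\omega^p(x),\omega^p(y)$ are compact, I may further shrink $U,V$ so that $\overline U\cap\omega^p(y)=\varnothing$ and $\overline V\cap\omega^p(x)=\varnothing$; this makes every vector difference occurring in \eqref{large-x}--\eqref{large-y} nonzero, so all the quantities $\sigma(\phi(0^+,\cdot)-\phi(0^+,\cdot))$ are well defined. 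I also record that $y\in\omega^p(y)$ forces $\overline{O(y)}=\omega^p(y)$ (and likewise for $x$), so Proposition~\ref{prop1}, applied with base point $y$, yields $\phi(t,y^*)-\phi(t,y^{**})\in\Lambda$ and $\sigma(\phi(t,y^*)-\phi(t,y^{**}))\equiv\mathrm{const}$ for all distinct $y^*,y^{**}\in\omega^p(y)$ and all $t$; the symmetric statement holds on $\omega^p(x)$.

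The engine for \eqref{large-x} is the monotonicity of $t\mapsto\sigma\big(\phi(t,u)-\phi(t,v)\big)$ along any pair of distinct bounded solutions: this combines Corollary~\ref{nonlinear system large-constant}(ii) (local constancy on $\Lambda$) with (iii) (strict drop when leaving $\Lambda$) and the identity $\sigma\big(\phi(0^+,u)-\phi(0^+,v)\big)=\sigma_m(u-v)$, which comes from Proposition~\ref{th-monotone of NmM}(ii) applied to the linearisation \eqref{differe-equa} along $\phi(\cdot,u),\phi(\cdot,v)$ (whose coefficient matrix is of the form \eqref{linear sys assumption}). Applying this with $u=x$, $v=y^*$ (legitimate as $x\ne y^*$) gives, for every $m\ge1$ and every $y^*\in\omega^p(y)$,
\[
\sigma\big(\phi((mT)^+,x)-\phi((mT)^+,y^*)\big)\ \le\ \sigma\big(\phi(0^+,x)-\phi(0^+,y^*)\big)\ =\ \sigma_m(x-y^*),
\]
with the left-hand side, moreover, non-increasing in $m$. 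Hence \eqref{large-x} reduces to the following uniform comparison: there are an open $U\ni x$ and $N\in\NN$ such that
\[
\sigma\big(\phi((mT)^+,x)-\phi((mT)^+,y^*)\big)\ \le\ \sigma_m(x^*-y^*)\qquad\text{for all }x^*\in U,\ y^*\in\omega^p(y),\ m\ge N .
\]
The mirror-image argument, interchanging the roles of $x$ and $y$ and using $y\in\omega^p(y)$, $y\notin\omega^p(x)$, produces an open $V\ni y$ and $N'\in\NN$ giving \eqref{large-y}; one then takes $\max\{N,N'\}$ and intersects the two pairs of neighbourhoods with $U_0,V_0$.

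To prove the uniform comparison I would exploit the recurrence $x\in\omega^p(x)$ together with the $P$-invariance of $\omega^p(y)$: choosing $n_0$ with $\phi(n_0T,x)\in U$, for $m\ge n_0$ one may write $\phi((mT)^+,x)-\phi((mT)^+,y^*)=\phi(((m-n_0)T)^+,\tilde x)-\phi(((m-n_0)T)^+,\tilde y)$ with $\tilde x=\phi(n_0T,x)\in U$ and $\tilde y=\phi(n_0T,y^*)\in\omega^p(y)$; this (together with monotonicity once more) lets $N\approx n_0$ enter and reduces the task to comparing $\sigma_m(x-y^*)$ with $\inf_{x^*\in U}\sigma_m(x^*-y^*)$ uniformly over the compact set $\omega^p(y)$. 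For this I would use that $\sigma_m$ is lower semicontinuous (Lemma~\ref{theta openset}: $\theta_h$ open) and takes only the finitely many values $\{1,3,\dots,\tilde n\}$, that $\{x\}-\omega^p(y)$ is compact, and the constancy of $\sigma$ on differences within $\omega^p(y)$ recorded above; stratifying $\omega^p(y)$ by the value of $y^*\mapsto\sigma_m(x-y^*)$, treating the ``degenerate'' stratum $\{y^*:x-y^*\notin\Lambda\}$ separately, and covering each stratum by finitely many of the open sets $\{z:\sigma_m(z)\ge h\}$ should yield a single $U$ and $N$.

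The step I expect to be the main obstacle is exactly this last uniform comparison over the \emph{whole} compact set $\omega^p(y)$. The only hypothesis tying $x$ to $\omega^p(y)$ is $x-y\in\Lambda$ at the single point $y$, whereas $y^*\mapsto\sigma_m(x-y^*)$ is merely lower (not upper) semicontinuous --- it can jump upward precisely at the points $y^*$ with $x-y^*\notin\Lambda$ --- so a naive neighbourhood argument fails to give one $U$ that works simultaneously for all $y^*$. Overcoming it is what genuinely forces the joint use of the recurrence of $x$, the invariance of $\omega^p(y)$, the finiteness of the range of $\sigma$, and Proposition~\ref{prop1} applied along $\omega^p(y)$, and is also the reason the conclusion is an inequality valid only for $m\ge N$.
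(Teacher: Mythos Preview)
Your handling of the ``Meanwhile'' assertion and your reduction via monotonicity (the chain $\sigma(\phi(0^+,x^*)-\phi(0^+,y^*))\ge\sigma(\phi((mT)^+,x^*)-\phi((mT)^+,y^*))$) are fine. The gap is in how you try to manufacture the uniform $U$ and $N$: you attempt a comparison of $\sigma_m(x-y^*)$ with $\inf_{x^*\in U}\sigma_m(x^*-y^*)$ at time $0$, and rightly notice this is hard because $\sigma_m$ is only lower semicontinuous and $x-y^*$ need not lie in $\Lambda$. Your proposed remedy---recurrence of $x$ to pick $n_0$ with $\phi(n_0T,x)\in U$ and a stratification of $\omega^p(y)$---is circular (you invoke $U$ before it is constructed) and, even if repaired, does not obviously close.

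The paper avoids this obstacle altogether by moving forward in time \emph{before} taking neighbourhoods. For each fixed $y^*\in\omega^p(y)$ (nonequal to $x$ since $x\notin\omega^p(y)$), Corollary~\ref{nonlinear system large-constant}(v) supplies an integer $n_{y^*}$ with $\phi(n_{y^*}T,x)-\phi(n_{y^*}T,y^*)\in\Lambda$. Now $\Lambda$ is open and $\sigma$ is genuinely continuous there, so there are neighbourhoods $U_{y^*}\ni x$, $V_{y^*}\ni y^*$ on which $\sigma(\phi(n_{y^*}T,\cdot)-\phi(n_{y^*}T,\cdot))$ is constant and equal to $\sigma(\phi(n_{y^*}T,x)-\phi(n_{y^*}T,y^*))$. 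Compactness of $\omega^p(y)$ gives a finite subcover $V_{y^1},\dots,V_{y^l}$; set $U=\bigcap_iU_{y^i}$ and $N=\max_i n_{y^i}$. For any $(x^*,y^*)\in U\times\omega^p(y)$, choose $i$ with $y^*\in V_{y^i}$ and chain: $\sigma(\phi(0^+,x^*)-\phi(0^+,y^*))\ge\sigma(\phi(n_{y^i}T,x^*)-\phi(n_{y^i}T,y^*))=\sigma(\phi(n_{y^i}T,x)-\phi(n_{y^i}T,y^*))\ge\sigma(\phi((mT)^+,x)-\phi((mT)^+,y^*))$ for $m\ge N$. The ``obstacle'' you identified disappears because the neighbourhood argument is carried out at a time where the difference lies in $\Lambda$, not at time $0$. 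Note also that the hypotheses $x\in\omega^p(x)$ and $y\in\omega^p(y)$ are not actually used here; only $x\notin\omega^p(y)$ and $y\notin\omega^p(x)$ matter, and your appeal to recurrence is unnecessary.
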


\begin{proof}
  We only prove that \eqref{large-x} is correct, while the proof of \eqref{large-y} is similar.
   \par
  Choose $y^{*}\in\omega^{p}(y)$, then $x\notin\omega^{p}(y)$. According to Corollary \ref{nonlinear system large-constant}(v), one can choose $n_{y^*}=n_0(x,y^*)\in \mathbb{N}$ with $n_{y^*}T>t_0$, such that $\phi(n_ {y^*}T,x)-\phi(n_{y^*}T,y^*)\in \Lambda$. Thus, there exist $U_{y^*},V_{y^*}$ such that
  \begin{equation*}
    \sigma(\phi(n_{y^*}T,x)-\phi(n_{y^*}T,y^*))=\sigma(\phi(n_{y^*}T,U_ {y^*})-\phi(n_{y^*}T,V_{y^*})),
  \end{equation*}
  where $\phi(n_{y^*}T,U_ {y^*})-\phi(n_{y^*}T,V_{y^*})$ represents the subtraction of elements in between $\phi(n_{y^*}T,U_ {y^*})$ and $\phi(n_{y^*}T,V_ 
  {y^*})$.
  By virtue of the compactness of $\omega^p(y)$, there exists a finite subset $\{y^1,y^2,\cdots,y^l\}$  of $\omega^{p}(y) $ such that $V_{y^{i}}(i=1,2,\cdots,l)$ is an open cover of
  $\omega^{p}(y)$. Let: 
  \begin{equation*}
     U=\bigcap\limits_{i=1}^{l} U_{y^{i}},\quad N=\max_{1\leq i \leq l} n_{y^{i}}.
  \end{equation*}
Then, one has  $y^*\in V_{y^{i}},i\in\{1,2,\cdots,l\}$ for any $(x^*,y^*)\in U \times\omega(y)$. Therefore,
 \begin{equation*}
  \begin{aligned}
 \sigma(\phi (0^+,x^*)-\phi(0^+,y^*)) &\geq\sigma(\phi((n_{y^{i}}T),x^*)-\phi((n_{y^{i}}T),y^*))\\
   &=\sigma(\phi (n_{y^{i}}T),x)-\phi((n_{y^{i}}T),y^*))\\
   &\geq \sigma(\phi((mT)^+,x)-\phi((mT)^+,y^*)).
  \end{aligned}
  \end{equation*}
  \par
Similarly, we can find $V$ and probably different $N\in\mathbb{N}$, which satisfy \eqref{large-y} on $\omega^p(x)\times V$. By Corollary \ref{nonlinear 
system large-constant}(ii),(iii), one can find  $N$ large enough such that both \eqref{large-x} and \eqref{large-y} hold together. 

Observe that  $x-y\in\Lambda$, if necessary, smaller $U , V$ can 
be taken such that $\sigma(\overline{x}-\overline{y})=constant$ for all $(\overline{x},\overline{y})\in U \times V$.
\end{proof}

We now prove Proposition \ref{prop3}.

\begin{proof}[Proof of Proposition \ref{prop3}]
Assume that there exist $x^{'},y^{'} \in \omega^{p}(x^{0}) \ (x^{'}\neq y^{'})$ satisfy 
$x'\in\alpha^{p}(x'),y'\in\alpha^{p}(y')$, and there is $t_1\in\mathbb{R}$, such that $\phi(t_1,x')-\phi(t_1,y')\notin \Lambda$. Without loss of generality, we set $t_1=0$. By Corollary \ref{nonlinear system large-constant}(v), one can choose $n_0\in \mathbb{N}$ with $n_0T>t_0$, such that
\begin{equation*}
\begin{aligned}
    \phi(n_0T,x^{'})-\phi(n_0T,y^{'}),\quad \phi(-n_0T,x^{'})-\phi(-n_0T,y^{'})\in \Lambda,\\
    \sigma(\phi(-n_{0}T,x^{'})-\phi(-n_{0}T,y^{'}))>\sigma(\phi(n_{0}T,x^{'})-\phi(n_{0}T,y^{'})).
\end{aligned}
\end{equation*}
If $x^{'}\in\alpha^{p}(y^{'})(similar \ to \ y^{'}\in\alpha^{p}(x^{'})$), then $x^{'}\in \overline{O(y^{'})}$, which is a contradiction to Proposition \ref{prop1}. Thus, 
we may assume that 
$x^{'}\notin\alpha^{p}(y^{'}),y^{'}\notin\alpha^{p}(x^{'})$. Let $x^*=\phi(-n_0T,x^{'}),y^*=\phi(-n_0T,y^{'})$, we have
\begin{equation*}\label{proor of prop3 no.1}
\sigma(x^*-y^*)>\sigma(\phi(2n_0T,x^*)-\phi(2n_0T,y^*)),
\end{equation*}
Apparently, 
$$
\begin{aligned}
& x^*-y^*,\phi(2n_0T,x^*)-\phi(2n_0T,y^*)\in \Lambda, \\
& x^*\notin\alpha^{p}(y^*),y^*\notin\alpha^{p}(x^*).
\end{aligned}
$$

Let $X=\alpha^p(x^*)$ (similar to $\alpha^p(y^*)$), then $X$ is a $Baire$ space. By Definition \ref{transitive}, the continuous mapping $P^{-1}$ is 
transitive on $X$. It then follows from Lemma \ref{inverse-transitive}, that $P$ is also transitive on $X$. Thus, there exists $x^{''}\in X=\alpha^p(x^*)$ such that $\alpha^p(x^*)=\omega^p(x^{''})$; and moreover, there exists a sequence $\{n_k\}\subset \mathbb{N}$ such that $P^{n_{k}}x^{''}\rightarrow x^*$, as $n_k\to\infty$. For $k\gg 1$, let $x=P^{n_{k}}x^{''}$, then $\omega^{p}(x)=\alpha^{p}(x^*)$.\
Therefore, there exist $x,y$ close enough to $x^*,y^*$, such that $x\in \omega^p(x),x\notin \omega^p(y),y\in \omega^p(y),y\notin \omega^p(x),x-y\in\Lambda $ 
and
\begin{equation}\label{proof of pror3 no.2}
\sigma(x-y)>\sigma(\phi(2n_0T,x)-\phi(2n_0T,y)).
\end{equation}

\par
{\it We now claim that: there exist appropriate $k\in\mathbb{N}$, $\{x^1,x^2,\cdots,x^k\} \in \omega^{p}(x)$, and $\{y^1,y^2,\cdots,y^{k}\}\\ \in \omega^ {p}(y)$, such that
 \begin{align}
\sigma(\phi(0^+,x^{i})-\phi(0^+,y^{i}))&\leq\sigma(\phi(2n_0T,x)-\phi(2n_0T,y)),\quad 1\leq i\leq k,\\
\sigma(x-y)&=\sigma (x^k-y^k).\label{x-y=xk-yk}
 \end{align}}
If the claim is correct, set $i=k$ , then 
$$
\sigma(x-y)\leq\sigma(\phi(2n_0T,x)-\phi(2n_0T,y)),
$$ which is a contradiction to \eqref{proof of 
pror3 no.2}, Proposition \ref{prop3} is then proved.

We now prove that the claim is true. For this purpose, denote
  \begin{equation*}
    \begin{aligned}
    \mathcal{M}=&\{m\in\mathbb{N};\ m\geq\max(N,2n_{0}),\ and\ \phi(mT,x)\in U\},\\
    \mathcal{N}=&\{n\in\mathbb{N};\ n\geq\max(N,2n_{0}),\ and\ \phi(nT,y)\in V\},
    \end{aligned}
  \end{equation*}
where $N,U,V$ is as defined in Lemma $\ref{large-x,y}$.

Note that $x\in\omega^p(x),y\in\omega^p(y)$, then $\mathcal{M}$ and $\mathcal{N}$ are infinite sets. As a consequence, there are natural numbers, 
$k_1,k_2,l_1,l_2$,$m_1,m_2,m_3$,$n_1,n_2,n_3$ as in Lemma $\ref{exist-k}$; and we define sequences as follows:
\begin{equation*}
  \begin{aligned}
  &x^{i}=\phi(m_1T,x),\quad y^{i}=\phi(im_1T,y),\quad 1\leq i \leq k_1,\\
  &x^{k_1+j}=\phi(m_2T,x),\quad y^{k_1+j}=\phi((jm_2+k_1m_1)T,y),
  \quad 1\leq j\leq k_2,\\
  &x^{k_1+k_2+1}=\phi((n_3-k_1m_1-k_2m_2)T,x),\quad y^{k_1+k_2+1}=
        \phi(n_3T,y),\\
    &x^{k_1+k_2+i+1}=\phi((in_{1}+n_3-k_1m_1-k_2m_2)T,x),\quad y^{k_1+k_2+i+1}=
    \phi(n_1T,y),\quad 1\leq i\leq l_1,\\
    &x^{k_1+k_2+l_1+j+1}=\phi((jn_2+l_1n_1+n_3-k_1m_1-k_2m_2)T,x),
    \quad \\&y^{k_1+k_2+l_1+j+1}=
    \phi(n_2T,y),\quad 1\leq j\leq l_2.
  \end{aligned}
\end{equation*}
We now prove the following inequation by induction:
\begin{equation}\label{inequality-1}
  \sigma(\phi(0^+,x^{i})-\phi(0^+,y^{i}))\leq\sigma(\phi(2n_0T,x)-\phi(2n_0T,y)),\, 1\leq i\leq k_1+k_2+l_1+l_2+1.
\end{equation}

(1) If $i=1$, then
\begin{equation*}
  \begin{aligned}
   \sigma(\phi(0^+,x^1)-\phi(0^+,y^1)) &=\sigma(\phi((m_1T)^+,x)-\phi((m_1T)^+,y)\\
&\leq\sigma(\phi(2n_0T,x)-\phi(2n_0T,y)).
  \end{aligned}
  \end{equation*}
Then, \eqref{inequality-1} is correct for $i=1$.

(2) Suppose that there is $i\in \{1,2,\cdots,k_1+k_2-1\},$ such that 
$$ 
\sigma(\phi(0^+,x^i)-\phi(0^+,y^i))\leq\sigma(\phi(2n_0T,x)-\phi(2n_0T,y)).
$$ Note that
\begin{equation*}
\begin{aligned}
 x^{i+1}-y^{i+1}&=\phi(m_1T,x)-\phi(m_1T,y^{i}),\quad 1\leq i<k_1\\
x^{i+1}-y^{i+1}&=\phi(m_{2}T,x)-\phi(m_{2}T,y^{i}),\quad k_1\leq i<k_1+k_2
\end{aligned}
\end{equation*}
Let $x^*=x^{i},y^*=y^{i},m=m_s,s=1,2$, by virtue of \eqref{large-x},
\begin{equation*}
\begin{split}
  \sigma(\phi(0^+,x^{i+1})-\phi(0^+,y^{i+1}))&=\sigma(\phi((m_sT)^+,x)-\phi((m_sT)^+,y^{i}))\\
  &\leq\sigma(\phi(0^+,x^{i})-\phi(0^+,y^{i})\\
  &\leq \sigma(\phi(2n_0T,x)-\phi(2n_0T,y)).
\end{split}
\end{equation*}
Therefore,
\begin{equation*}
  \sigma(\phi(0^+,x^{i})-\phi(0^+,y^{i}))\leq\sigma(\phi(2n_0T,x)-\phi(2n_0T,y)),\quad 1\leq i \leq k_1+k_2.
\end{equation*}

(3) If $i=k_1+k_2+1$, then
\begin{equation*}
 \begin{aligned}
 &x^{k_1+k_2+1}-y^{k_1+k_2+1}=\phi((n_3-k_1m_1-k_2m_2)T,x)-\phi((n_3-k_1m_1-k_2m_2)T,y^{k_1+k_2}).
 \end{aligned}
\end{equation*}
Let $x^*=x^{k_1+k_2},y^*=y^{k_1+k_2},m=n_3-k_1m_1-k_2m_2(m_1>N)$, by \eqref{large-x}
\begin{equation*}
\begin{aligned}
\sigma(\phi(0^+,x^{k_1+k_2+1})-\phi(0^+,y^{k_1+k_2+1}))&=\sigma(\phi((mT)^+,x))-\phi((mT)^+,y^{k_1+k_2})\\
&\leq \sigma(\phi(0^+,x^{k_1+k_2})-\phi(0^+,y^{k_1+k_2})\\
&\leq \sigma(\phi(2n_0T,x)-\phi(2n_0T,y)).
\end{aligned}
\end{equation*}
Thus,
\begin{equation*}
  \sigma(\phi(0^+,x^{i})-\phi(0^+,y^{i}))\leq \sigma(\phi(2n_{0}T,x)-\phi(2n_{0}T,y)),\quad \ i=k_1+k_2+1.
\end{equation*}

\par
Similarly, as to $1\leq i \leq l_1+l_2$, let $x^{'}=x^{i},y^{'}=y^{i},m=n_s,s=1,2$, by \eqref{large-y} we have
\begin{equation*}
  \sigma(\phi(0^+,x^{k_1+k_2+1+i})-\phi(0^+,y^{k_1+k_2+1+i}))\leq \sigma(\phi(2n_0T,x)-\phi(2n_0T,y)).
\end{equation*}
In conclusion,
\begin{equation*}
\sigma(\phi(0^+,x^{i})-\phi(0^+,y^{i}))\leq \sigma(\phi(2n_{0}T,x)-\phi(2n_{0}T,y)),\quad 1\leq i \leq k,
\end{equation*}
where $k=k_1+k_2+l_1+l_2+1.$

By virtue of Lemma $\ref{large-x,y}$,
$$
\sigma(\bar x-\bar y)\equiv constant,
$$ 
for any $(\bar x,\bar y)\in U\times V$. Observe that 
$x^k=\phi(m_3T,x)\in U, y^k=\phi(n_2T,y)\in V$, \eqref{x-y=xk-yk} is correct as well.

We have already checked that the claim is true. The proof of Proposition \ref{prop3} is completed.
\end{proof}

\subsection{Proof of Theorem \ref{main}}

\begin{proof}[Proof of Theorem \ref{main}]
Define a mapping $h$ on $\omega^p(x^0)$ by
$$
\omega^p(x^0)\ni x\mapsto(x_1,x_2)\in\mathbb{R}^2.
$$
The mapping is one-to-one, otherwise, there exist $ x^1,x^2 \in\omega^p(x^0),x^1\neq x^2 $,\ such that
  \begin{equation*}
    h(x^1)=h(x^2),
  \end{equation*}
  that is,
  \begin{equation*}
    x^1-x^2=(0,0,\ast,\cdots)\notin\Lambda,
  \end{equation*}
 which is contradictory to Theorem \ref{changzhixing-th}.
\par
Thus, for any $y\in h( \omega^p(x^0) )\subset \mathbb{R}^2,$ there is only one $ x\in\omega^p(x^0)$ such that $y=h(x).$ Therefore, one can define
\begin{equation*}
  \tilde \varphi(k,y)=\tilde \varphi(k,h(x))=h(\phi(kT,x))=(\phi(kT,x)_1,\phi(kT,x)_2), \quad k\in\mathbb{Z}.
  \end{equation*}
 Let $M= \omega^p(x^0)$, then $h(M)= h( \omega^p(x^0) )\subset \mathbb{R}^2$, we prove that $(h(M),\mathbb{Z},\tilde \varphi)$ is a dynamical system.
 
(1) $\tilde \varphi(k,\cdot)|_{h(M)}$ is a continuous function for any $k\in \mathbb{Z}$.

Suppose on the contrary that $\tilde \varphi(k,\cdot)$ is discontinuous at some $y^*\in h(M)$. Then, there exist $x^*\in M$ and a pair of sequence $\{x^n\}\subset M$, $\{y^n\}\subset h(M)$ such that
\begin{equation*}
\begin{aligned}
&\tilde \varphi(k,y^n)=h(\phi(kT,x^n))=(\phi(kT,x^n)_1, \phi(kT,x^n)_2), \\
&\tilde \varphi(k,y^*) = h(\phi(kT,x^*))=(\phi(kT,x^*)_1,\phi(kT,x^*)_2),
\end{aligned}
\end{equation*}
and 
  $$
  \phi(kT,x^n) \rightarrow \phi(kT,x^*),\quad \tilde \varphi(k,y^n) \nrightarrow y^*.
  $$ 
Therefore, there exists a subsequence $y^{n_l} \rightarrow y^{'}, y^{'} \neq y^{*}.$ While, $ x^{n_l}\to x^*$ is equivalent to that
\[
  \begin{split}
 & (\phi(kT,x^{n_l})_1, \phi(kT,x^{n_l})_2,\cdots)\to (\phi(kT,x^*)_1,\phi(kT,x^*)_2,\cdots)\\
\Rightarrow 
&\tilde \varphi(k, y^{n_l})=(\phi(kT,x^{n_l})_1, \phi(kT,x^{n_l})_2)\to (\phi(kT,x^*)_1,\phi(kT,x^*)_2)\\
&=\tilde \varphi(k,y^*),
  \end{split}
  \]
which is a contradiction to $y'\neq y^*$.

(2) $\tilde\varphi(0,y)=y.$

We have $\tilde\varphi(0,y)=\tilde\varphi(0,h(x))=h(\phi(0,x))=h(x)=y.$

(3) $\tilde\varphi(n_1+n_2,y)=\tilde\varphi(n_1,\tilde\varphi(n_2,y)).$

  Let $\widetilde{x}=\phi(n_2T,x)$, then $\tilde\varphi(n_2,y)=\tilde\varphi(n_2,h(x))=h(\phi(n_2T,x))=h(\widetilde{x})$. Therefore,
\begin{equation*}
 \begin{aligned}
            \tilde\varphi(n_1,\tilde\varphi(n_2,y))&=\tilde\varphi(n_1,h(\widetilde{x}))\\&=h(\phi(n_1T,\widetilde{x}))\\
            &=(\phi(n_1T,\widetilde{x})_1,\phi(n_1T,\widetilde{x})_2)\\
            &=(\phi(n_1T,\phi(n_2T,x))_1,\phi(n_1T,\phi(n_2T,x))_2)\\
            &=(\phi((n_1+n_2)T,x)_1,\phi((n_1+n_2)T,x)_2),
  \end{aligned}
\end{equation*}
\begin{equation*}
\begin{aligned}
\tilde\varphi(n_1+n_2,y)=&h(\phi((n_1+n_2)T,x))\\=&(\phi((n_1+n_2)T,x)_1,\phi((n_1+n_2)T,x)_2),
 \end{aligned}
 \end{equation*}
 that is, $\tilde\varphi(n_1+n_2,y)=\tilde\varphi(n_1,\tilde\varphi(n_2,y))$.
 
We have checked that $(h(M),\mathbb{Z},\tilde\varphi)$ is a dynamical system. By Definition \ref{insert property}, 
  $(M,\mathbb{Z},\phi)$ is embedded into $(h(M),\mathbb{Z},\tilde\varphi)$. 
 
Thus, Theorem \ref{main} is proved.
\end{proof}

\begin{remark}
{\rm
  It is easy to see that $h:\omega^p(x^0)\to h(\omega^p(x^0))$ is a Lipschitz mapping, by using Floquet theory and constructing nested invariant cones, the inverse mapping $h^{-1}$ restricted to $h(\omega^p(x^0))$ is expected to be also Lipschitz(which means that the embedding is Lipschitz), we will prove it in the following work.}
\end{remark}
\section{Discussions}
\subsection{Transform \eqref{system 1.1}+\eqref{assume system 1.1} into \eqref{system 1.1}+\eqref{assume simple system 1.1}}\label{transformation}
Consider \eqref{system 1.1}+\eqref{assume system 1.1} and let 
\begin{equation}
\begin{split}
&\delta_i=\left\{
\begin{aligned}
1,\quad  \text{ if } \frac{\partial f_i}{\partial x_{i-1}}\geq 0  \\
-1, \quad  \text{ if } \frac{\partial f_i}{\partial x_{i-1}}< 0
\end{aligned}
\right.
,\quad i=1,\cdots,n\\
&\Delta=\delta_1\cdots \delta_n.
\end{split}
\end{equation}
Let $y_i=\mu_ix_i$($i=1,\cdots,n$), where
\[
\mu_1=\delta_1,\mu_2=\delta_1\delta_2,\cdots, \mu_n=\delta_1\cdots\delta_n=\Delta.
\]
Then, 
\begin{equation}\label{transform-equa}
  \dot{y}_i=\mu_i f_i(\mu_{i-1}x_{i-1},\mu_{i}x_{i},\mu_{i+1}x_{i+1})\triangleq g_i(y_{i-1},y_{i},y_{i+1}).
\end{equation}
Moreover, 
\begin{itemize}
  \item[{\rm (i)}] if $2\leq i\leq n$,
  \begin{equation*}
  \begin{split}
    & \operatorname{sgn}(\frac{\partial g_i}{\partial y_{i-1}})=\operatorname{sgn}(\mu_i\frac{\partial f_i}{\partial f_{i-1}}\mu_{i-1})=\mu^2_i=1>0, \\
    & \operatorname{sgn}(\frac{\partial g_{i-1}}{\partial y_{i}})=\operatorname{sgn}(\mu_{i-1}\frac{\partial f_{i-1}}{\partial f_{i}}\mu_{i})=\mu^2_i=1>0;
  \end{split} 
  \end{equation*}
  \item[{\rm (ii)}] $i=1$, 
  \begin{equation*}
  \begin{split}
    & \operatorname{sgn}(\frac{\partial g_1}{\partial y_{n}})=\operatorname{sgn}(\mu_1\delta_1\mu_{n})=\Delta=\delta_1\cdots\delta_n, \\
    & \operatorname{sgn}(\frac{\partial g_{n}}{\partial y_{1}})=\operatorname{sgn}(\mu_{n}\delta_1\mu_{1})=\Delta=\delta_1\cdots\delta_n.
  \end{split} 
  \end{equation*}
\end{itemize}
Therefore, then the Jacobi matrix of \eqref{transform-equa} is one of the following forms:
\begin{itemize}
  \item[{\rm (i)}] $\Delta=1$, 
\[
J(g)=\left(\begin{array}{cccc}
* & \geq 0 & & > 0 \\
>0 & * & \geq 0 & \\
& \ddots & \ddots & \geq 0 \\
\geq 0 & & >0 & *
\end{array}\right)
\]
and it is a positive feedback system;
  \item [{\rm (ii)}] $\Delta=-1$,
\[
J(g)=\left(\begin{array}{cccc}
* & \geq 0 & & < 0 \\
>0 & * & \geq 0 & \\
& \ddots & \ddots & \geq 0 \\
\leq 0 & & >0 & *
\end{array}\right)
\]
and it is a negative feedback system.
\end{itemize}

\subsection{Dissipative conditions}
In this subsection, we impose a dissipative condition on system  \eqref{system 
1.1}+\eqref{assume system 1.1} as the following:
\begin{equation*}
\noindent {\bf (H)}\quad f_i(t,x_{i-1},x_i,x_{i+1})x_i<0,\quad \text{for any}\ |x_i|\geq  C,|x_{i\pm 1}|\leq |x_i|,\ i=1,\cdots,n,\ \text{and all}\ t\in[0,T],
\end{equation*}
where $C>0$ is constant. 

Fix $x\in\Omega$ and let $\phi(t,x)$ be the solution of \eqref{system 
1.1}+\eqref{assume system 1.1} with $\phi(0,x)=x$. Then, similar as the arguments in \cite[Lemma 4.3]{WZ}, $\phi(t,x)$ will enter $A=\{x\in\mathbb{R}^n:|x_i|\leq C\ \text{for all }i\}$ at some finite time and then remains there. And hence, we have

\begin{theorem}
  Assume that {\bf (H)} holds true. Then, for any $x\in\Omega$, the  $\omega$-limit set $\omega^p(x)$ of the Poincar\'{e} map $P$ of \eqref{system 
1.1}+\eqref{assume system 1.1} at $x$ can be continuous embedded into a compact subset of $\mathbb{R}^2$.
\end{theorem}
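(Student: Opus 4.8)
The theorem is essentially a corollary of Theorem~\ref{main}: once we know that every forward orbit of the Poincar\'e map is bounded, Theorem~\ref{main} produces the embedding. So the plan has three steps: reduce to the normalized form \eqref{system 1.1}+\eqref{assume simple system 1.1}, derive boundedness of forward orbits from \textbf{(H)}, and then quote Theorem~\ref{main}. For the reduction I would use the substitution $y_i=\mu_i x_i$ of Section~\ref{transformation}, i.e.\ the linear isomorphism $L=\operatorname{diag}(\mu_1,\dots,\mu_n)$ of $\mathbb{R}^n$ with $L^{-1}=L$; it turns \eqref{system 1.1}+\eqref{assume system 1.1} into a system $\dot y=g(t,y)$ of the form \eqref{system 1.1}+\eqref{assume simple system 1.1}. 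Since $L$ is $t$-independent it conjugates the time-$T$ solution maps, hence the Poincar\'e maps ($P_g=L\circ P\circ L^{-1}$), so it carries the orbit and the $\omega$-limit set of $P$ at $x$ onto those of $P_g$ at $Lx$; composing a homeomorphic embedding of $\omega^{P_g}(Lx)$ into $\mathbb{R}^2$ with $L$ then gives one of $\omega^p(x)$. Moreover $L$ maps the cube $A=\{z:|z_i|\le C\ \forall i\}$ onto itself (because $|\mu_i|=1$) and preserves \textbf{(H)}: writing $x_j=\mu_j y_j$ one has $g_i(t,y)\,y_i=\mu_i f_i(t,x)\cdot\mu_i x_i=f_i(t,x)\,x_i$ and $|y_{i\pm1}|\le|y_i|\iff|x_{i\pm1}|\le|x_i|$. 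So it is enough to prove the theorem for \eqref{system 1.1}+\eqref{assume simple system 1.1}.

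The only real input beyond Theorem~\ref{main} is that \textbf{(H)} makes forward orbits bounded. Put $V(x)=\max_{1\le i\le n}|x_i|$ and let $\phi(t,x)$ be the solution through $x$. Where $V(\phi(t,x))>0$, the upper right Dini derivative satisfies
\[
D^{+}\!\bigl(V\circ\phi\bigr)(t)=\max\bigl\{\operatorname{sgn}\!\bigl(\phi_i(t,x)\bigr)\,f_i\bigl(t,\phi(t,x)\bigr)\ :\ |\phi_i(t,x)|=V(\phi(t,x))\bigr\},
\]
and any index $i$ realizing this maximum when $V(\phi(t,x))\ge C$ has $|\phi_{i\pm1}|\le|\phi_i|$ and $|\phi_i|\ge C$, so by \textbf{(H)} the corresponding term $\operatorname{sgn}(\phi_i)f_i(t,\phi)=f_i(t,\phi)\phi_i/|\phi_i|$ is strictly negative. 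Hence $V\circ\phi$ is nonincreasing as long as it stays $\ge C$, so $V(\phi(t,x))\le\max(V(x),C)$ for all $t\ge0$ and every forward orbit is bounded and globally defined. A finer version of the same computation --- bounding each of the finitely many continuous functions $(z,t)\mapsto f_i(t,z)z_i$ above by a negative constant on the compact set $\{(z,t):C\le V(z)\le\rho,\ |z_i|=V(z),\ t\in[0,T]\}$ and using $T$-periodicity of $f$ --- produces a uniform decay rate, hence shows that $\phi$ enters the cube $A$ in finite time and then remains there; this is the statement of the preamble, proved as in \cite[Lemma 4.3]{WZ}. For the theorem only the (much weaker) boundedness of $O^{+}(x)=\{P^{n}x\}_{n\ge0}$ is needed.

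Finally, for any $x\in\Omega$ the set $\overline{O^{+}(x)}$ is compact by the previous step, so the hypothesis of Theorem~\ref{main} is met and there is a homeomorphism $h:\omega^p(x)\to h(\omega^p(x))\subset\mathbb{R}^2$; since $\omega^p(x)$ is compact, $h(\omega^p(x))$ is a compact subset of $\mathbb{R}^2$, which is exactly the asserted continuous embedding into a compact planar set. There is no genuine obstacle in this argument --- it is just Theorem~\ref{main} applied after the conjugacy reduction of Section~\ref{transformation}. The only points requiring verification are routine: that this reduction preserves both \textbf{(H)} and the cube $A$ (done above), and the elementary monotonicity of $V$ above level $C$; the sharper ``enters $A$ in finite time and stays'', although part of the preamble, is not actually used.
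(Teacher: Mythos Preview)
Your proposal is correct and follows the same route as the paper: establish forward boundedness from \textbf{(H)} (the paper does this by citing \cite[Lemma~4.3]{WZ}, while you spell out the $V(x)=\max_i|x_i|$ argument) and then invoke Theorem~\ref{main}. Your explicit treatment of the conjugacy reduction to the normalized form \eqref{system 1.1}+\eqref{assume simple system 1.1} and the observation that only boundedness (not the sharper absorption into $A$) is needed are welcome clarifications, but the overall strategy is the paper's.
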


\subsection{$1$-cooperative and $2$-cooperative systems}\label{1-2 cooperative}
As we have known that, the system \eqref{cooperative} is a $2$-cooperative system equivalents to that the linear variation system of \eqref{cooperative}  is of the following form:
\begin{itemize}
  \item  $(-1)a_{1 n}(t),(-1)a_{n 1}(t) \geq 0$;
  \item  $a_{i j}(t) \geq 0$ for all $i, j$ with $|i-j|=1$;
  \item  $a_{i j}(t)=0$ for all $i, j$ with $1<|i-j|<n-1$.
\end{itemize}
Moreover, if there exist $i,j\in \{1,\cdots,n\}$ such that $a_{i-1,i}(t)\equiv a_{j,j+1}(t)\equiv 0$, then by adjusting the order of $x_1,\cdots,x_n$ appropriately, and use a transformation like in Section \ref{transformation}, a  $2$-cooperative system can be converted into a tridiagonal structure with its linear variation system in the following form:
 \begin{itemize}
  \item  $a_{i j}(t) \geq 0$ for all $i, j$ with $|i-j|=1$;
  \item  $a_{i j}(t)=0$ for all $i, j$ with $1<|i-j|<n-1$.
\end{itemize}
Particularly, if $a_{i j}(t) > 0$ for all $i, j$ with $|i-j|=1$, \eqref{cooperative}  is a tridiagonal cooperative system as in \cite{smillie1984,simith1991}, and then a $1$-cooperative system. 

For instance, consider the following four-dimensional system of Lotka-Volterra type:
\begin{equation}\label{no t Lotka-Volterra}
\begin{aligned}
   \dot{x_1}&=x_1[\gamma_1-a_{11}x_1-a_{14}x_4],\\
   \dot{x_2}&=x_2[\gamma_2+a_{21}x_1-a_{22}x_2+a_{23}x_3], \\
   \dot{x_3}&=x_3[\gamma_3+a_{32}x_2-a_{33}x_3+a_{34}x_4], \\
   \dot{x_4}&=x_4[\gamma_4+a_{43}x_3-a_{44}x_4],
\end{aligned}
\end{equation}
where $\gamma_i>0,\ a_{ij}>0,\ x_i(0)\geq0, \ 1\leq i \leq 4 $. Here $x_i$ denotes the density of species $i$ at time $t$, $r_i$ is the intrinsic growth rate of 
species $i$,\ $a_{ij},i\neq j$ measures the action of species $j$ on the growth of species $i$, and $a_{ii}$ represents the crowding effect within species $i$ (see \cite{elkhader1992_sub}).

If $a_{14}=0$, then \eqref{no t Lotka-Volterra} is a tridiagonal cooperative system. Therefore, any bounded solutions of the system converge to an equilibrium point (see \cite{smillie1984}). If $a_{14}<0$, then the system is cooperative, which 
is discussed in the literatures \cite{hirsch1982,hirsch1984,Hofbauer1988,Per,smith1988} and Poincar\'{e}-Bendixson theorem holds true. If $a_{14}>0$, the system is neither cooperative nor competitive but a strongly $2$-cooperative system (see \cite{smith1988, weiss2021_sub}), and Poincar\'{e}-Bendixson theorem also established(see \cite{weiss2021_sub}). In \cite[Lemma 3.1]{elkhader1992_sub}, Elkhader used Hofbauer's conclusion in \cite{Hofbauer1988} to show that the solution of system 
\eqref{no t Lotka-Volterra} is bounded under the condition $a_{22}(a_{33}a_{44}-a_{43}a_{34})-a_{23}a_{32}a_{44}>0$; moreover, under further assumptions, boundary equilibria and interior equilibrium can exist. 

Now consider the time period Lotka-Volterra system:
\begin{equation}\label{Lotka-Volterra}
\begin{aligned}
   \dot{x_1}&=x_1[\gamma_1(t)-a_{11}(t)x_1-a_{14}(t)x_4],\\
   \dot{x_2}&=x_2[\gamma_2(t)+a_{21}(t)x_1-a_{22}(t)x_2+a_{23}(t)x_3], \\
   \dot{x_3}&=x_3[\gamma_3(t)+a_{32}(t)x_2-a_{33}(t)x_3+a_{34}(t)x_4], \\
   \dot{x_4}&=x_4[\gamma_4(t)+a_{43}(t)x_3-a_{44}(t)x_4],
\end{aligned}
\end{equation}
where $(x_1,x_2,x_3,x_4)\in \mathbb{R}^{4}_{+}$,\ $a_{ij}(t)$ and $\gamma_i(t)$ are continuous periodic functions with a minimum positive 
period of $T$, and all other assumptions for \eqref{no t Lotka-Volterra} are also hold for \eqref{Lotka-Volterra}. If $a_{14}(t)\equiv 0$, then \eqref{Lotka-Volterra} is a time $T$-periodic tridiagonal cooperative system; and hence, any bounded solutions of the system converge to a $T$-peridoic orbit(see \cite{simith1991}). If $a_{14}(t)>0$, then it is strongly $2$-cooperative system, and our conclusion Theorem \ref{main} can be used here.

There are some interesting questions for \eqref{Lotka-Volterra}. For instance, whether conditions similar to 
$a_{22}(a_{33}a_{44}-a_{43}a_{34})-a_{23}a_{32}a_{44}>0$ can also imply dissipative of \eqref{Lotka-Volterra}? Whether there exist boundary or interior periodic orbits under further assumptions? We plan to answer these questions in the following work.

\end{document}